\newtheorem{theorem}{Theorem}
\theoremstyle{plain}
\newtheorem{corollary}{Corollary}
\newtheorem{proposition}{Proposition}
\numberwithin{equation}{section}
\begin{document}
\title{Notes concerning Codazzi pairs on almost anti-Hermitian manifolds}
\author{Aydin GEZER}
\address{Ataturk University, Faculty of Science, Department of Mathematics,
25240, Erzurum-Turkey.}
\email{agezer@atauni.edu.tr}
\author{Hasan CAKICIOGLU}
\address{Ataturk University, Faculty of Science, Department of Mathematics,
25240, Erzurum-Turkey.}
\email{h.cakicioglu@gmail.com}
\subjclass[2000]{Primary 53C05, 53C55; Secondary 62B10.}
\keywords{Anti-K\"{a}hler structure, Codazzi pair, conjugate connection,
twin metric, statistical structure.}

\begin{abstract}
Let $\nabla $ be a linear connection on an $2n$-dimensional almost
anti-Hermitian manifold $M$\ equipped with an almost complex structure $J$,
a pseudo-Riemannian metric $g$ and the twin metric $G=g\circ J$. In this
paper, we first introduce three types of conjugate connections of linear
connections relative to $g$, $G$ and $J$. We obtain a simple relation among
curvature tensors of these conjugate connections. To clarify relations of
these conjugate connections, we prove a result stating that conjugations
along with an identity operation together act as a Klein group. Secondly, we
give some results exhibiting occurrences of Codazzi pairs which generalize
parallelism relative to $\nabla $. Under the assumption that $(\nabla ,J)$
being a Codazzi pair, \ we derive a necessary and sufficient condition the
almost anti-Hermitian manifold $(M,J,g,G)$ is an anti-K\"{a}hler relative to
a torsion-free linear connection $\nabla $. Finally, we investigate
statistical structures on $M$ under $\nabla $ ($\nabla $ is a $J-$invariant
torsion-free connection).
\end{abstract}

\maketitle

\section{\protect\bigskip \textbf{Introduction}}

A pseudo-Riemannian metric $g$ on a smooth $2n-$manifold $M$ is called
neutral if it has signature $(n,n)$. A pair $(M,g)$ is called a
pseudo-Riemannian manifold. An anti-K\"{a}hler structure on a manifold $M$
consists of an almost complex structure $J$ and a neutral metric $g$
satisfying the followings:

\textbullet\ algebraic conditions

$(a)$ $J$ is an almost complex structure: $J^{2}=-id.$

$(b)$ The neutral metric $g$ is anti-Hermittian relative to $J$:%
\begin{equation*}
g(JX,JY)=-g(X,Y)
\end{equation*}%
or equivalently
\begin{equation}
g(JX,Y)=g(X,JY),\forall X,Y\in TM.  \label{GC}
\end{equation}

\textbullet\ analytic condition

$(c)$ $J$ is parallel relative to the Levi-Civita connection $\nabla ^{g}$ $%
(\nabla ^{g}J=0)$. This condition is equivalent to ${\Phi }_{J}g=0$, where ${%
\Phi }_{J}$ is the Tachibana operator \cite{IscanSalimov:2009}.

Obviously, by algeraic conditions, the triple $(M,J,g)$ is an almost
anti-Hermitian manifold. Given the anti-Hermitian structure $(J,g)$ on a
manifold $M$, we can immediately recover the other anti-Hermitian metric,
called the twin metric, by the formula:%
\begin{equation*}
G(X,Y)=(g\circ J)(X,Y)=g(JX,Y).
\end{equation*}%
Thus, the triple $(M,J,G)$ is another an almost anti-Hermitian manifold.
Notes that the condition (\ref{GC}) also refers to the purity of $g$
relative to $J$. From now on, by manifold we understand a smooth $2n-$%
manifold and will use the notations $J$, $g$ and $G$ for the almost complex
structure, the pseudo-Riemannian metric and the twin metric, respectively.
In addition, we shall assign the quadruple $(M,J,g,G)$ as almost
anti-Hermitian manifolds.

Our paper aims to study Codazzi pairs on an almost anti-Hermitian manifold $%
(M,J,g,G)$. The structure of the paper is as follows. In Sect. 2, we start
by the $g-$conjugation, $G-$conjugation and $J-$conjugation of arbitrary
linear connections. Then we state the relations among the $(0,4)-$curvature
tensors of these conjugate connections and also show that the set which has $%
g-$conjugation, $G-$conjugation, $J-$conjugation and an identity operation
is a Klein group on the space of linear connections. In Sect. 3, we obtain
some remarkable results under the assumption that $(\nabla ,G)$ or $(\nabla
,J)$ being a Codazzi pair, where $\nabla $ is a linear connection. One of
them is a necessary and sufficient condition under which the almost
anti-Hermitian manifold $(M,J,g,G)$ is an anti-K\"{a}hler relative to a
torsion-free linear connection $\nabla $. Sect. 4 closes our paper with
statistical structures under the assumption that $\nabla $ being $J-$%
invariant relative to a torsion-free linear connection $\nabla $.

\section{Conjugate connections}

In the following let $(M,J,g,G)$ be an almost anti-Hermitian manifold and $%
\nabla $ be a linear connection. We define respectively the conjugate
connections of $\nabla $ relative to $g$ and $G$ as the linear connections
determined by the equations:%
\begin{equation*}
Zg\left( X,Y\right) =g\left( {\nabla }_{Z}X,Y\right) +g\left( X,{{\nabla }%
_{Z}^{\ast }}Y\right)
\end{equation*}%
and

\begin{equation*}
ZG\left( X,Y\right) =G\left( {\nabla }_{Z}X,Y\right) +G\left( X,{{\nabla }%
_{Z}^{\dagger }}Y\right)
\end{equation*}%
for all vector fields $X,Y,Z$ on $M$. We are calling these connections $g-$%
conjugate connection and $G-$conjugate connection, respectively. Note that
both $g-$conjugate connection and $G-$conjugate connection of a linear
connection are involutive: ${\left( {\nabla }^{\ast }\right) }^{\ast
}=\nabla $ and ${\left( {\nabla }^{\dagger }\right) }^{\dagger }=\nabla $.
Conjugate connections are a natural generalization of Levi-Civita
connections from Riemannian manifolds theory. Especially, ${\nabla }^{\ast }$
(or ${\nabla }^{\dagger })$ coincides with $\nabla $ if and only if $\nabla $
is the Levi-Civita connection of $g$ (or $G)$.

Given a linear connection $\nabla $ of $(M,J,g,G)$, the $J-$conjugate
connection of $\nabla $, denoted ${\nabla }^{J}$, is a new linear connection
given by%
\begin{equation*}
{\nabla }^{J}(X,Y)=J^{-1}(\nabla _{X}JY)
\end{equation*}%
for any vector fields $X$ and $Y$ on $M$ \cite{Simon}. Since conjugate
connections arise from affine differential geometry and from geometric
theory of statistical inferences, many studies have been carried out in the
recent years \cite{Amari,Nomizu,Nomizu2}.

Through relationships among the $g-$conjugate connection ${\nabla }^{\ast }$%
, $G-$conjugate connection ${\nabla }^{\dagger }$ and $J-$conjugate
connection ${\nabla }^{J}$ of $\nabla $, we have the following result.

\begin{theorem}
Let $(M,J,g,G)$ be an almost anti-Hermitian manifold. ${\nabla }^{\ast }$, ${%
\nabla }^{\dagger }$ and ${\nabla }^{J}$ denote respectively $g-$%
conjugation, $G-$conjugation and $J-$conjugation of a linear connection $%
\nabla $. Then $(id,\ast ,\dagger ,J)$ acts as the 4-element Klein group on
the space of linear connections:%
\begin{eqnarray*}
i)\text{ }{\left( {\nabla }^{\ast }\right) }^{\ast } &=&{\left( {\nabla }%
^{\dagger }\right) }^{\dagger }={\left( {\nabla }^{J}\right) }^{J}=\nabla ,
\\
ii)\text{ }{\left( {\nabla }^{\dagger }\right) }^{J} &=&{\left( {\nabla }%
^{J}\right) }^{\dagger }={\nabla }^{\ast }, \\
iii)\text{ }{\left( {\nabla }^{\ast }\right) }^{J} &=&{\left( {\nabla }%
^{J}\right) }^{\ast }={\nabla }^{\dagger }, \\
iv)\text{ }{\left( {\nabla }^{\ast }\right) }^{\dagger } &=&{\left( {\nabla }%
^{\dagger }\right) }^{\ast }={\nabla }^{J}.
\end{eqnarray*}
\end{theorem}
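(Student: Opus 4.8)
The plan is to reduce the whole statement to two facts: that each of the three conjugations is an involution, and that $g$-conjugation and $J$-conjugation, viewed as maps on the space of linear connections, commute with composite equal to $G$-conjugation; everything else is then group-theoretic bookkeeping. Throughout I will freely use that $g$ is pure with respect to $J$, i.e. $G(X,Y)=g(JX,Y)=g(X,JY)$, that $G$ is non-degenerate (since $g$ is and $J$ is invertible), so that $\nabla^{\dagger}$ is well defined, and that $J^{2}=-\mathrm{id}$ forces $J^{-1}=-J$. The involutivity statements for $\ast$ and $\dagger$ in $i)$ are already recorded before the theorem; for $J$ I would simply compute
\[
(\nabla^{J})^{J}_{X}Y=J^{-1}\!\big(\nabla^{J}_{X}(JY)\big)=J^{-1}\!\big(J^{-1}(\nabla_{X}(J^{2}Y))\big)=(J^{-1})^{2}(-\nabla_{X}Y)=\nabla_{X}Y,
\]
since $(J^{-1})^{2}=(J^{2})^{-1}=-\mathrm{id}$. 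This disposes of $i)$.

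Next I would prove $iii)$ by expanding $ZG(X,Y)$ in two different ways. For $(\nabla^{\ast})^{J}=\nabla^{\dagger}$, write $ZG(X,Y)=Z\,g(X,JY)$ and apply the defining equation of $\nabla^{\ast}$ with second argument $JY$:
\[
ZG(X,Y)=g(\nabla_{Z}X,JY)+g\big(X,\nabla^{\ast}_{Z}(JY)\big).
\]
By purity the first term is $g(J\nabla_{Z}X,Y)=G(\nabla_{Z}X,Y)$; in the second term $\nabla^{\ast}_{Z}(JY)=J\big((\nabla^{\ast})^{J}_{Z}Y\big)$ by definition of $J$-conjugation, hence $g(X,\nabla^{\ast}_{Z}(JY))=g\big(X,J(\nabla^{\ast})^{J}_{Z}Y\big)=G\big(X,(\nabla^{\ast})^{J}_{Z}Y\big)$. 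Comparing with $ZG(X,Y)=G(\nabla_{Z}X,Y)+G(X,\nabla^{\dagger}_{Z}Y)$ and invoking non-degeneracy of $G$ gives $\nabla^{\dagger}=(\nabla^{\ast})^{J}$.

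For $(\nabla^{J})^{\ast}=\nabla^{\dagger}$, instead write $ZG(X,Y)=Z\,g(JX,Y)$ and apply the defining equation of the $g$-conjugate of the connection $\nabla^{J}$, with first argument $JX$:
\[
ZG(X,Y)=g\big(\nabla^{J}_{Z}(JX),Y\big)+g\big(JX,(\nabla^{J})^{\ast}_{Z}Y\big).
\]
Here $\nabla^{J}_{Z}(JX)=J^{-1}(\nabla_{Z}(J^{2}X))=-J^{-1}\nabla_{Z}X=J\nabla_{Z}X$, so the first term is $G(\nabla_{Z}X,Y)$, while the second is $G\big(X,(\nabla^{J})^{\ast}_{Z}Y\big)$ directly from $G=g\circ J$. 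Comparing again with the defining equation of $\nabla^{\dagger}$ yields $\nabla^{\dagger}=(\nabla^{J})^{\ast}$. Together these give $iii)$ and, more importantly, show that $\ast$ and $J$ commute as operators with $\ast\circ J=J\circ\ast=\dagger$.

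Finally, since $\ast$, $\dagger$, $J$ are involutions and $\dagger=\ast\circ J=J\circ\ast$, the quadruple $(\mathrm{id},\ast,\dagger,J)$ is closed under composition and is a copy of $\mathbb{Z}_{2}\times\mathbb{Z}_{2}$; the remaining identities $ii)$ and $iv)$ then drop out by substitution, e.g. $(\nabla^{\dagger})^{J}=(J\circ\dagger)(\nabla)=(J\circ J\circ\ast)(\nabla)=\nabla^{\ast}$ and $(\nabla^{\ast})^{\dagger}=(\dagger\circ\ast)(\nabla)=(J\circ\ast\circ\ast)(\nabla)=\nabla^{J}$, with the mirror computations handling the other orderings. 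The computations are all short; the one point that needs care — and where a careless attempt goes astray — is keeping track of which of $\nabla$, $\nabla^{\ast}$, $\nabla^{J}$ sits on each slot when one differentiates $g$ through a conjugate pair, since that relation is not symmetric in its two arguments. Placing $J$ on the right slot in the two expansions above is exactly what makes both collapse onto the defining equation of $\nabla^{\dagger}$.
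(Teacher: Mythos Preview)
Your proof is correct and follows essentially the same strategy as the paper: establish involutivity, prove one of the composite identities by direct manipulation of $ZG(X,Y)$, and then derive the rest from the group structure. The only organizational difference is that the paper begins with $ii)$ (showing $(\nabla^{\dagger})^{J}=(\nabla^{J})^{\dagger}=\nabla^{\ast}$ by two direct computations) and then obtains $iii)$ partly by applying $J$-conjugation to $ii)$ and partly by a further direct computation, whereas you prove $iii)$ first and then extract $ii)$ and $iv)$ purely from the Klein-group relations; your bookkeeping is slightly more economical, but the underlying computations are the same.
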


\begin{proof}
\textit{i)} The statement is a direct consequence of definitions of
conjugate connections.

\textit{ii)} We compute%
\begin{eqnarray*}
G\left( {{\left( {\nabla }^{\dagger }\right) }_{Z}^{J}}X,Y\right) &=&G\left(
J^{-1}{{\nabla }_{Z}^{\dagger }}\left( JX\right) ,Y\right) \\
&=&G\left( {{\nabla }_{Z}^{\dagger }}\left( JX\right) ,J^{-1}Y\right) \\
&=&ZG\left( JX,J^{-1}Y\right) -G(JX,{\nabla }_{Z}\left( J^{-1}Y\right) ) \\
&=&Zg\left( J^{2}X,\ J^{-1}Y\right) -g(J^{2}X,{\nabla }_{Z}\left(
J^{-1}Y\right) ) \\
&=&-Zg\left( X,J^{-1}Y\right) +g(X,{\nabla }_{Z}\left( J^{-1}Y\right) ) \\
&=&-g\left( {{\nabla }_{Z}^{\ast }}X,J^{-1}Y\right) =G({{\nabla }_{Z}^{\ast }%
}X,Y)
\end{eqnarray*}%
which gives ${{\left( {\nabla }^{\dagger }\right) }^{J}={\nabla }^{\ast }}$.
Similarly%
\begin{eqnarray*}
ZG\left( X,Y\right) &=&G\left( {{\nabla }_{Z}^{J}}X,Y\right) +G\left( X,{{%
\left( {\nabla }^{J}\right) }_{Z}^{\dagger }}Y\right) , \\
Zg\left( JX,Y\right) &=&g\left( {JJ^{-1}\nabla }_{Z}\left( JX\right)
,Y\right) +g\left( JX,{{\left( {\nabla }^{J}\right) }_{Z}^{\dagger }}%
Y\right) , \\
Zg\left( JX,Y\right) &=&g\left( {\nabla }_{Z}\left( JX\right) ,Y\right)
+g\left( JX,{{\left( {\nabla }^{J}\right) }_{Z}^{\dagger }}Y\right) , \\
g(JX,{{\nabla }_{Z}^{\ast }}Y) &=&g\left( JX,{{\left( {\nabla }^{J}\right) }%
_{Z}^{\dagger }}Y\right)
\end{eqnarray*}%
which establishes ${{\left( {\nabla }^{J}\right) }^{\dagger }={\nabla }%
^{\ast }}$. Hence, we get ${{\left( {\nabla }^{\dagger }\right) }^{J}={%
\left( {\nabla }^{J}\right) }^{\dagger }=\nabla }$.

\textit{iii)} On applying the $J-$conjugation to both sides of $ii)$, ${{%
\nabla }^{\dagger }}={(\nabla }^{\ast })^{J}$ and also,%
\begin{eqnarray*}
g\left( JX,{{\left( {\nabla }^{J}\right) }_{Z}^{\ast }}Y\right) &=&\
Zg\left( JX,Y\right) -g\left( {{\nabla }_{Z}^{J}}\left( JX\right) ,Y\right)
\\
&=&\ ZG\left( X,Y\right) -G\left( {J^{-1}{\nabla }_{Z}^{J}}\left( JX\right)
,Y\right) \\
&=&\ ZG\left( X,Y\right) -G\left( {J^{-1}J^{-1}\nabla }_{Z}\left(
J^{2}X\right) ,Y\right) \\
&=&ZG\left( X,Y\right) -G\left( {\nabla }_{Z}X,Y\right) \\
&=&\ G\left( X,{{\nabla }_{Z}^{\dagger }}Y\right) =\ g\left( JX,{{\nabla }%
_{Z}^{\dagger }}Y\right) .
\end{eqnarray*}%
These show that ${{\nabla }^{\dagger }}={(\nabla }^{\ast })^{J}={{\left( {%
\nabla }^{J}\right) }^{\ast }}${.}

\textit{iv) }On applying the $G-$conjugation to both sides of $ii)$, ${%
\nabla }^{J}={\left( {\nabla }^{\ast }\right) }^{\dagger }$ and on applying
the $g-$conjugation to both sides of $iii)$, ${\nabla }^{J}={\left( {\nabla }%
^{\dagger }\right) }^{\ast }$. Thus, the proof completes.
\end{proof}

Recall that the curvature tensor field $R$ of a linear connection $\nabla $
is the tensor field, for all vector fields $X,Y,Z$,%
\begin{equation*}
R(X,Y)Z=\nabla _{X}\nabla _{Y}Z-\nabla _{Y}\nabla _{X}Z-\nabla _{\lbrack
X,Y]}Z.
\end{equation*}%
If $(M,g)$ is a (pseudo-)Riemannian manifold, it is sometimes convenient to
view the curvature tensor field as a $(0,4)-$tensor field by:%
\begin{equation*}
R(X,Y,Z,W)=g(R(X,Y)Z,W)
\end{equation*}%
called the $(0,4)-$curvature tensor field. If we consider the relationship
among the $(0,4)-$curvature tensor fields of $\nabla $, ${\nabla }^{\ast }$
and ${\nabla }^{J}$, we obtain the following.

\begin{theorem}
Let $(M,J,g,G)$ be an almost anti-Hermitian manifold. ${\nabla }^{\ast }$
and ${\nabla }^{J}$ denote respectively $g-$conjugation and $J-$conjugation
of a linear connection $\nabla $ on $M$. The relationship among the $(0,4)-$%
curvature tensor fields $R,R^{\ast }$ and $R^{J}$ of $\nabla $, ${\nabla }%
^{\ast }$ and ${\nabla }^{J}$ is as follow:%
\begin{equation*}
R\left( X,Y,JZ,W\right) =-R^{\ast }\left( X,Y,W,JZ\right) =R^{J}(X,Y,Z,JW)
\end{equation*}%
for \ all vector fields $X,Y,Z,W$ on $M$.
\end{theorem}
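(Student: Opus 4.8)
The plan is to derive each of the two claimed identities directly from the defining relations of the conjugate connections, using only the curvature definition and the algebraic properties of $J$ and $g$ recorded in the introduction, namely $J^2=-\mathrm{id}$ and the purity condition $g(JX,Y)=g(X,JY)$ (equivalently $G(X,Y)=g(JX,Y)$). The first identity, $R(X,Y,JZ,W)=-R^{\ast}(X,Y,W,JZ)$, should follow from the standard fact relating the curvatures of a connection and its $g$-conjugate: differentiating the defining relation $Zg(U,V)=g(\nabla_ZU,V)+g(U,\nabla^{\ast}_ZV)$ twice and antisymmetrizing yields $g(R(X,Y)U,V)+g(U,R^{\ast}(X,Y)V)=0$, i.e. $R(X,Y,U,V)=-R^{\ast}(X,Y,V,U)$. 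Substituting $U=JZ$ gives precisely the left-hand equality. I would either cite this as a known fact about conjugate connections (it appears in the affine/statistical geometry literature referenced in the paper) or include the two-line computation.

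For the second identity, $R(X,Y,JZ,W)=R^{J}(X,Y,Z,JW)$, I would start from the definition $\nabla^{J}_XY=J^{-1}(\nabla_XJY)=-J\nabla_X(JY)$ and compute the $(1,1)$-type curvature operator $R^{J}(X,Y)Z$. The key observation is that $J^{-1}$ (being parallel-free here, just a pointwise linear map, but constant in the sense that it commutes through the formula appropriately) conjugates the curvature operator: one expects $R^{J}(X,Y)Z=J^{-1}\bigl(R(X,Y)(JZ)\bigr)$. This follows by expanding $\nabla^{J}_X\nabla^{J}_YZ=J^{-1}\nabla_X\bigl(J\cdot J^{-1}\nabla_Y(JZ)\bigr)=J^{-1}\nabla_X\nabla_Y(JZ)$, and similarly for the other two terms in the curvature, so the $J^{-1}$ and $J$ telescope and $R^{J}(X,Y)Z=J^{-1}R(X,Y)(JZ)$. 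Then the $(0,4)$-curvature of $\nabla^{J}$ is taken with respect to $g$ as well (this is the implicit convention, since $g$ is the distinguished metric), so
\begin{equation*}
R^{J}(X,Y,Z,JW)=g\bigl(R^{J}(X,Y)Z,JW\bigr)=g\bigl(J^{-1}R(X,Y)(JZ),JW\bigr).
\end{equation*}
Now using the purity relation $g(J^{-1}A,JW)=g(JJ^{-1}A,W)=g(A,W)$ — more carefully, $g(J^{-1}A,JW)=g(A,W)$ because $g(JU,V)=g(U,JV)$ applied with $U=J^{-1}A$ — this collapses to $g(R(X,Y)(JZ),W)=R(X,Y,JZ,W)$, as desired.

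The main obstacle I anticipate is bookkeeping with $J^{-1}$ versus $J$ and the sign conventions: since $J^2=-\mathrm{id}$ we have $J^{-1}=-J$, and it is easy to drop or gain a sign when moving $J$ across $g$ using the purity identity, or when deciding whether the $(0,4)$-curvature of $\nabla^{J}$ should be formed with $g$ or with $G$. I would pin down the convention explicitly (use $g$ throughout, consistent with the $R^{\ast}$ side) and verify the $J^{-1}$-conjugation formula for $R^{J}$ termwise on all three summands of the curvature, including the $\nabla_{[X,Y]}$ term, to make sure nothing is lost. Once the operator-level identity $R^{J}(X,Y)Z=J^{-1}R(X,Y)(JZ)$ is secured, both $(0,4)$ identities are immediate consequences of the purity of $g$ relative to $J$, so the real content is entirely in that one conjugation lemma plus the classical conjugate-connection curvature relation.
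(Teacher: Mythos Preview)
Your proposal is correct, and for the first identity it is essentially the same argument as the paper's: both differentiate the defining relation of the $g$-conjugate twice and antisymmetrize to obtain $R(X,Y,U,V)=-R^{\ast}(X,Y,V,U)$, then specialize $U=JZ$. The paper phrases this via $G(Z,W)=g(JZ,W)$ and works with coordinate vector fields so that $[X,Y]=0$, whereas you state it in coordinate-free form and recognize it as the classical conjugate-curvature relation; the content is the same.

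Where your argument genuinely diverges is in the second identity. The paper does \emph{not} prove $R^{J}(X,Y,Z,JW)=R(X,Y,JZ,W)$ directly; instead it repeats the double-differentiation of $G(Z,W)$, this time rewriting $\nabla_{X}\nabla_{Y}(JZ)$ as $J\,\nabla^{J}_{X}\nabla^{J}_{Y}Z$, and thereby obtains $R^{J}(X,Y,Z,JW)+R^{\ast}(X,Y,W,JZ)=0$, linking $R^{J}$ to $R^{\ast}$ rather than to $R$. Your route isolates the operator-level conjugation lemma $R^{J}(X,Y)Z=J^{-1}R(X,Y)(JZ)$ first and then applies purity of $g$; this is shorter, coordinate-free, and makes transparent that the entire content of the theorem is the standard $R/R^{\ast}$ relation together with the trivial fact that conjugating a connection by a pointwise isomorphism conjugates its curvature operator. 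The paper's computation buys nothing extra here, so your version is a cleaner proof of the same statement.
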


\begin{proof}
Since the relation is linear in the arguments $X,$ $Y,W$ and $Z$, it
suffices to prove it only on a basis. Therefore we assume $X,Y,W,Z\in \{%
\frac{\partial }{\partial x^{1}},...,\frac{\partial }{\partial x^{2n}}\}$
and take computational advantage of the following vanishing Lie brackets%
\begin{equation*}
\lbrack X,Y]=[Y,W]=[W,Z]=0.
\end{equation*}%
Then we get
\begin{eqnarray*}
XYG\left( Z,W\right) &=&X\left( Yg\left( JZ,W\right) \right) \\
&=&X(g\left( {\nabla }_{Y}JZ,W\right) )+X\left( g\left( JZ,{{\nabla }%
_{Y}^{\ast }}W\right) \right) \\
&=&g\left( {{\nabla }_{X}\nabla }_{Y}JZ,W\right) +g\left( {\nabla }_{Y}JZ,{{%
\nabla }_{X}^{\ast }}W\right) \\
&&+g\left( {\nabla }_{X}JZ,{{\nabla }_{Y}^{\ast }}W\right) +g\left( JZ,{{{{%
\nabla }_{X}^{\ast }}\nabla }_{Y}^{\ast }}W\right)
\end{eqnarray*}%
and by alternation
\begin{eqnarray*}
YXG\left( Z,W\right) &=&g\left( {{\nabla }_{Y}\nabla }_{X}JZ,W\right)
+g\left( {\nabla }_{X}JZ,{{\nabla }_{Y}^{\ast }}W\right) \\
&&+g\left( {\nabla }_{Y}JZ,{{\nabla }_{X}^{\ast }}W\right) +g\left( JZ,{{{{%
\nabla }_{Y}^{\ast }}\nabla }_{X}^{\ast }}W\right) .
\end{eqnarray*}%
Because of the above relations, we find%
\begin{eqnarray*}
0 &=&\left[ X,Y\right] G\left( Z,W\right) =XYG\left( Z,W\right) -YXG\left(
Z,W\right) \\
0 &=&g\left( {{\nabla }_{X}\nabla }_{Y}JZ-{{\nabla }_{Y}\nabla }%
_{X}JZ,W\right) +g\left( JZ,{{{{\nabla }_{X}^{\ast }}\nabla }_{Y}^{\ast }}W-{%
{{{\nabla }_{Y}^{\ast }}\nabla }_{X}^{\ast }}W\right) \\
0 &=&R\left( X,Y,JZ,W\right) +R^{\ast }\left( X,Y,W,JZ\right)
\end{eqnarray*}%
and similarly%
\begin{eqnarray*}
0 &=&\left[ X,Y\right] G\left( Z,W\right) =XYG\left( Z,W\right) -YXG\left(
Z,W\right) \\
0 &=&G\left( {{J^{-1}\nabla }_{X}}J(J{^{-1}\nabla }_{Y}JZ)-J^{-1}{{\nabla }%
_{Y}}J(J{^{-1}\nabla }_{X}JZ),W\right) \\
&&+G\left( Z,{{{{\nabla }_{X}^{\ast }}\nabla }_{Y}^{\ast }}W-{{{{\nabla }%
_{Y}^{\ast }}\nabla }_{X}^{\ast }}W\right) \\
0 &=&G\left( {{{\nabla }_{X}^{J}\nabla }_{Y}^{J}}Z-{{{\nabla }_{Y}^{J}\nabla
}_{X}^{J}}Z,W\right) \\
&&+G\left( Z,{{{{\nabla }_{X}^{\ast }}\nabla }_{Y}^{\ast }}W-{{{{\nabla }%
_{Y}^{\ast }}\nabla }_{X}^{\ast }}W\right) \\
0 &=&g\left( {{{\nabla }_{X}^{J}\nabla }_{Y}^{J}}Z-{{{\nabla }_{Y}^{J}\nabla
}_{X}^{J}}Z,JW\right) \\
&&+g\left( {{{{\nabla }_{X}^{\ast }}\nabla }_{Y}^{\ast }}W-{{{{\nabla }%
_{Y}^{\ast }}\nabla }_{X}^{\ast }}W,JZ\right) \\
0 &=&R^{J}(X,Y,Z,JW)+R^{\ast }\left( X,Y,W,JZ\right) .
\end{eqnarray*}%
Hence, it follows that $R\left( X,Y,JZ,W\right) =-R^{\ast }\left( X,Y,W,JZ\
\right) =R^{J}(X,Y,Z,JW)$.
\end{proof}

\section{\noindent \noindent \noindent \noindent Codazzi Pairs}

Let $\nabla $ be an arbitrary linear connection on a pseudo-Riemannian
manifold $(M,g)$. Given the pair $(\nabla ,g)$, we construct respectively
the $(0,3)-$tensor fields $F$ and $F^{\ast }$ by%
\begin{equation*}
F(X,Y,Z):=(\nabla _{Z}g)(X,Y)
\end{equation*}%
and%
\begin{equation*}
F^{\ast }(X,Y,Z):=(\nabla _{Z}^{\ast }g)(X,Y),
\end{equation*}%
where $\nabla ^{\ast }$ is $g-$conjugation of $\nabla $. The tensor field $F$
(or $F^{\ast }$) is sometimes referred to as the cubic form associated to
the pair $(\nabla ,g)$ (or $(\nabla ^{\ast },g)$). These tensors are related
via%
\begin{equation*}
F(X,Y,Z)=g(X,(\nabla ^{\ast }-\nabla )_{Z}Y)
\end{equation*}%
so that
\begin{equation*}
F^{\ast }(X,Y,Z):=(\nabla _{Z}^{\ast }g)(X,Y)=-F(X,Y,Z).
\end{equation*}%
Therefore $F(X,Y,Z)=F^{\ast }(X,Y,Z)=0$ if and only if $\nabla ^{\ast
}=\nabla $, that is, $\nabla $ is $g-$self-conjugate \cite{Zhang}.

For an almost complex structure $J$, a pseudo-Riemannian metric $g$ and a
symmetric bilinear form $\rho $ on a manifold $M$, we call $(\nabla ,J)$ and
$(\nabla ,\rho )$, respectively, a Codazzi pair, if their covariant
derivative $(\nabla J)$ and $(\nabla \rho )$, respectively, is (totally)
symmetric in $X,Y,Z$: \cite{Simon}
\begin{equation*}
\left( {\nabla }_{Z}J\right) X=\left( {\nabla }_{X}J\right) Z\text{, }\left(
{\nabla }_{Z}\rho \right) \left( X,Y\right) =\left( {\nabla }_{X}\rho
\right) \left( Z,Y\right) \text{.}
\end{equation*}

\subsection{The Codazzi pair $(\protect\nabla ,G)$}

Let $\nabla $ be a linear connection $\nabla $ on $(M,J,g,G)$. Next we shall
consider the Codazzi pair $(\nabla ,G)$. In here, the $(0,3)-$tensor field $%
F $ is defined by
\begin{equation*}
F(X,Y,Z):=\left( {\nabla }_{Z}G\right) \left( X,\ Y\right) .
\end{equation*}

\begin{proposition}
\label{pr1}Let $\nabla $ be a linear connection on $(M,J,g,G)$. If $(\nabla
,G)$ is a Codazzi pair, then the following statements hold:

$i)$ $F\left( X,Y,Z\right) =\left( {\nabla }_{Z}G\right) \left( X,Y\right) $
is totally symmetric,

$ii)$ $\left( {{\nabla }_{JZ}^{\ast }}G\right) \left( X,Y\right) =\left( {{%
\nabla }_{JX}^{\ast }}G\right) \left( Z,Y\right) ,$

$iii)$ $T^{\nabla }=T^{{\nabla }^{\ast }}$ if and only if $(\nabla ^{\ast
},J)$ is a Codazzi pair,

$iv)$ $T^{\nabla }=T^{{\left( {\nabla }^{\ast }\right) }^{J}}$,

where $\nabla ^{\ast }$ is the $g-$conjugation of $\nabla $ and ${\left( {%
\nabla }^{\ast }\right) }^{J}$ is the $J-$conjugation of $\nabla ^{\ast }$.
\end{proposition}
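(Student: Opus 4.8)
The plan is to show that the Codazzi hypothesis forces the cubic form $F(X,Y,Z)=(\nabla_{Z}G)(X,Y)$ to be totally symmetric, and then to extract $ii)$--$iv)$ from a single identity relating $F$ to $\nabla^{\ast}J$, $\nabla J$ and the difference tensor $\nabla^{\ast}-\nabla$. For $i)$: the purity condition (\ref{GC}) says exactly that $G$ is a symmetric bilinear form, so $F$ is automatically symmetric in its first two arguments; the hypothesis that $(\nabla,G)$ is a Codazzi pair is precisely symmetry of $F$ under interchange of its first and third arguments. Since the transpositions $(1\,2)$ and $(1\,3)$ generate $S_{3}$, $F$ is invariant under every permutation of its arguments.

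The heart of the matter is the identity
\[
F(X,Y,Z)=(\nabla_{Z}G)(X,Y)=g\bigl(Y,\,(\nabla^{\ast}_{Z}J)X+J(\nabla^{\ast}-\nabla)_{Z}X\bigr),
\]
obtained by expanding $(\nabla_{Z}G)(X,Y)=ZG(X,Y)-G(\nabla_{Z}X,Y)-G(X,\nabla_{Z}Y)$, writing $G(A,B)=g(JA,B)$, differentiating $Zg(JX,Y)$ by the defining relation $Zg(A,B)=g(\nabla_{Z}A,B)+g(A,\nabla^{\ast}_{Z}B)$ of the $g$-conjugate, and commuting $J$ through $g$ using (\ref{GC}) wherever it occurs. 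A parallel computation — with $\nabla$ and $\nabla^{\ast}$ interchanged, using $(\nabla^{\ast})^{\ast}=\nabla$ and the anticommutation $(\nabla_{Z}J)\circ J=-\,J\circ(\nabla_{Z}J)$ that follows from $\nabla(J^{2})=0$ — yields the companion formula $(\nabla^{\ast}_{Z}G)(X,Y)=F(JX,JY,Z)$. This is the only genuinely computational step; the thing to watch is that $\nabla$ is not metric, so each differentiation of $g(JX,Y)$ produces a $\nabla^{\ast}$-term that has to be carried along. Granting the two formulas, $ii)$ is immediate: $(\nabla^{\ast}_{JZ}G)(X,Y)=F(JX,JY,JZ)$ and $(\nabla^{\ast}_{JX}G)(Z,Y)=F(JZ,JY,JX)$ coincide by the total symmetry of $F$ established in $i)$.

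For $iii)$ and $iv)$ I would use the elementary identity $\bigl(T^{\nabla'}-T^{\nabla''}\bigr)(X,Y)=(\nabla'-\nabla'')_{X}Y-(\nabla'-\nabla'')_{Y}X$, so that two connections have equal torsion precisely when their difference tensor is symmetric. For $iii)$: interchanging the first and third arguments in the key identity and invoking the total symmetry of $F$ gives $(\nabla^{\ast}_{Z}J)X-(\nabla^{\ast}_{X}J)Z=J\bigl((\nabla^{\ast}-\nabla)_{X}Z-(\nabla^{\ast}-\nabla)_{Z}X\bigr)$; as $J$ is invertible, $(\nabla^{\ast},J)$ is a Codazzi pair if and only if $\nabla^{\ast}-\nabla$ is symmetric, i.e.\ if and only if $T^{\nabla}=T^{\nabla^{\ast}}$. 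For $iv)$: by Theorem~1, $(\nabla^{\ast})^{J}=\nabla^{\dagger}$ is the $G$-conjugate of $\nabla$; and for any Codazzi pair $(\nabla,h)$ with $h$ a nondegenerate symmetric bilinear form the $h$-conjugate of $\nabla$ has the same torsion as $\nabla$, because the difference tensor $K$ determined by $h(K_{Z}X,Y)=(\nabla_{Z}h)(X,Y)$ is symmetric in $Z$ and $X$ exactly by the Codazzi condition (the computation already made for $h=g$ in Section~3). Taking $h=G$ gives $T^{\nabla}=T^{\nabla^{\dagger}}=T^{(\nabla^{\ast})^{J}}$. The main obstacle throughout is the bookkeeping in the displayed identity; once it is in hand, $ii)$--$iv)$ are purely formal consequences of the symmetry of $F$ together with the Klein-group relation $(\nabla^{\ast})^{J}=\nabla^{\dagger}$.
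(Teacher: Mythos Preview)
Your argument is correct. For $i)$--$iii)$ it is essentially the paper's proof repackaged: your ``key identity'' $F(X,Y,Z)=g\bigl(Y,(\nabla^{\ast}_{Z}J)X+J(\nabla^{\ast}-\nabla)_{Z}X\bigr)$ is exactly what the paper obtains en route to its equation~(\ref{GC1}), and your companion formula $(\nabla^{\ast}_{Z}G)(X,Y)=F(JX,JY,Z)$ is what the paper recovers after its substitution $X\mapsto JX$, $Y\mapsto JY$, $Z\mapsto JZ$; you simply isolate these once and read off $ii)$ and $iii)$ from the symmetry of $F$, whereas the paper writes the same computation out line by line. The one genuine difference is $iv)$: the paper reads $T^{\nabla}=T^{(\nabla^{\ast})^{J}}$ directly off the intermediate equation~(\ref{GC1}) by recognising $J^{-1}\nabla^{\ast}_{Z}(JX)=(\nabla^{\ast})^{J}_{Z}X$, while you instead invoke Theorem~1 to identify $(\nabla^{\ast})^{J}=\nabla^{\dagger}$ and then appeal to the general fact (Proposition~\ref{pr2}\,$iv)$ in the paper) that $h$-conjugation preserves torsion whenever $(\nabla,h)$ is Codazzi. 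Your route is a bit more conceptual and makes the role of the Klein-group structure explicit; the paper's is more self-contained at this point in the exposition, since it does not need to call on the later Proposition~\ref{pr2}.
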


\begin{proof}
$i)$ Due to symmetry of $G$, $F(X,Y,Z)=\left( {\nabla }_{Z}G\right) \left(
X,Y\right) =\left( {\nabla }_{Z}G\right) \left( Y,X\right) =F(Y,X,Z)$. Also
for $({\nabla },G)$ being a Codazzi pair, $F(X,Y,Z)=\left( {\nabla }%
_{Z}G\right) \left( X,Y\right) =F(X,Y,Z)=\left( {\nabla }_{X}G\right) \left(
Z,Y\right) =F(Z,Y,X)$, that is, $F$ is totally symmetric in all of its
indices.

$ii)$ By virtue of the purity of $g$ relative to $J$, we yield%
\begin{equation*}
\left( {\nabla }_{Z}G\right) \left( X,Y\right) =\left( {\nabla }_{X}G\right)
\left( Z,\ Y\right)
\end{equation*}%
\begin{eqnarray*}
&&Zg\left( JX,Y\right) -g\left( J{\nabla }_{Z}X,Y\right) -g\left( JX,{\nabla
}_{Z}Y\right) \\
&=&Xg\left( JZ,Y\right) -g\left( J{\nabla }_{X}Z,Y\right) -g\left( JZ,{%
\nabla }_{X}Y\right)
\end{eqnarray*}%
\begin{equation*}
g\left( {{\nabla }_{Z}^{\ast }}\left( JX\right) ,Y\right) -g\left( J{\nabla }%
_{Z}X,Y\right) =g\left( {{\nabla }_{X}^{\ast }}\left( JZ\right) ,\ Y\right)
-g\left( J{\nabla }_{X}Z,Y\right)
\end{equation*}%
\begin{eqnarray*}
&&g\left( {{\nabla }_{Z}^{\ast }}\left( JX\right) ,Y\right) -Zg\left(
X,JY\right) +g\left( X,{{\nabla }_{Z}^{\ast }}\left( JY\right) \right) \\
&=&\ g\left( {{\nabla }_{X}^{\ast }}\left( JZ\right) ,Y\right) -Xg\left(
Z,JY\right) +g\left( Z,{{\nabla }_{X}^{\ast }}\left( JY\right) \right)
\end{eqnarray*}%
\begin{eqnarray*}
&&Zg\left( X,JY\right) -g\left( {{\nabla }_{Z}^{\ast }}\left( JX\right)
,Y\right) -g\left( X,{{\nabla }_{Z}^{\ast }}\left( JY\right) \right) \\
&=&Xg\left( Z,JY\right) -g\left( {{\nabla }_{X}^{\ast }}\left( JZ\right) ,\
Y\right) -g\left( Z,{{\nabla }_{X}^{\ast }}\left( JY\right) \right) .
\end{eqnarray*}%
Putting $X=JX,\ Y=JY\ \ $and$\ \ Z=JZ$ in the last relation, we find%
\begin{eqnarray*}
&&JZg\left( JX,J(JY)\right) -g\left( {{\nabla }_{JZ}^{\ast }}\left(
J(JX)\right) ,JY\right) -g\left( JX,{{\nabla }_{JZ}^{\ast }}\left(
J(JY)\right) \right) \\
&=&JXg\left( JZ,J(JY)\right) -g\left( {{\nabla }_{JX}^{\ast }}\left(
J(JZ)\right) ,JY\right) -g\left( JZ,{{\nabla }_{JX}^{\ast }}\left(
J(JY)\right) \right)
\end{eqnarray*}%
\begin{eqnarray*}
&&JZg\left( JX,Y\right) -g\left( {{\nabla }_{JZ}^{\ast }}X,JY\right)
-g\left( JX,{{\nabla }_{JZ}^{\ast }}Y\right) \\
&=&JXg\left( JZ,Y\right) -g\left( {{\nabla }_{JX}^{\ast }}Z,JY\right)
-g\left( JZ,{{\nabla }_{JX}^{\ast }}Y\right)
\end{eqnarray*}%
\begin{eqnarray*}
&&JZG\left( X,Y\right) -G\left( {{\nabla }_{JZ}^{\ast }}X,Y\right) -G\left(
X,{{\nabla }_{JZ}^{\ast }}Y\right) \\
&=&JXG\left( Z,Y\right) -G\left( {{\nabla }_{JX}^{\ast }}Z,Y\right) -G\left(
Z,{{\nabla }_{JX}^{\ast }}Y\right)
\end{eqnarray*}%
\begin{equation*}
\left( {\nabla }_{JZ}^{\ast }G\right) \left( X,Y\right) =\left( {\nabla }%
_{JX}^{\ast }G\right) \left( Z,Y\right) .
\end{equation*}

$iii)$ Let $T^{\nabla }$ and $T^{{\nabla }^{\ast }}$ be respectively the
torsion tensors of $\nabla $ and its $g-$conjugation $\nabla ^{\ast }$. We
calculate%
\begin{equation*}
\left( {\nabla }_{Z}G\right) \left( X,Y\right) =\left( {\nabla }_{X}G\right)
\left( Z,Y\right)
\end{equation*}%
\begin{eqnarray*}
&&Zg\left( JX,Y\right) -g\left( J{\nabla }_{Z}X,Y\right) -g\left( JX,{\nabla
}_{Z}Y\right) \\
&=&Xg\left( JZ,Y\right) -g\left( J{\nabla }_{X}Z,Y\right) -g\left( JZ,{%
\nabla }_{X}Y\right)
\end{eqnarray*}%
\begin{eqnarray*}
&&g\left( {{\nabla }_{Z}^{\ast }}\left( JX\right) ,Y\right) -g\left( J{%
\nabla }_{Z}X,Y\right) \\
&=&g\left( {{\nabla }_{X}^{\ast }}\left( JZ\right) ,Y\right) -g\left( J{%
\nabla }_{X}Z,Y\right)
\end{eqnarray*}%
\begin{eqnarray*}
&&G\left( J^{-1}{{\nabla }_{Z}^{\ast }}\left( JX\right) ,Y\right) -G\left( {%
\nabla }_{Z}X,Y\right) \\
&=&G\left( {{J^{-1}\nabla }_{X}^{\ast }}\left( JZ\right) ,Y\right) -G\left( {%
\nabla }_{X}Z,Y\right)
\end{eqnarray*}%
\begin{equation}
G\left( J^{-1}\left\{ {{\nabla }_{Z}^{\ast }}\left( JX\right) -{{\nabla }%
_{X}^{\ast }}\left( JZ\right) \right\} ,Y\right) =G\left( {\nabla }_{Z}X-{%
\nabla }_{X}Z,Y\right)  \label{GC1}
\end{equation}%
from which we get%
\begin{equation*}
J^{-1}\left\{ {{\nabla }_{Z}^{\ast }}\left( JX\right) -{{\nabla }_{X}^{\ast }%
}\left( JZ\right) \right\} ={\nabla }_{Z}X-{\nabla }_{X}Z
\end{equation*}%
\begin{equation*}
J^{-1}\left\{ \left( {{\nabla }_{Z}^{\ast }}J\right) X+J{{\nabla }_{Z}^{\ast
}}X-\left( {{\nabla }_{X}^{\ast }}J\right) Z-J{{\nabla }_{X}^{\ast }}%
Z\right\} ={\nabla }_{Z}X-{\nabla }_{X}Z
\end{equation*}%
\begin{eqnarray*}
&&J^{-1}\left\{ \left( {{\nabla }_{Z}^{\ast }}J\right) X-\left( {{\nabla }%
_{X}^{\ast }}J\right) Z\right\} +\left( {{\nabla }_{Z}^{\ast }}X-{{\nabla }%
_{X}^{\ast }}Z-\left[ Z,X\right] \right) \\
&=&{\nabla }_{Z}X-{\nabla }_{X}Z-\left[ Z,X\right]
\end{eqnarray*}%
\begin{equation*}
J^{-1}\left\{ \left( {{\nabla }_{Z}^{\ast }}J\right) X-\left( {{\nabla }%
_{X}^{\ast }}J\right) Z\right\} +T^{{\nabla }^{\ast }}\left( Z,X\right)
=T^{\nabla }(Z,X).
\end{equation*}%
This means that $T^{{\nabla }^{\ast }}\left( Z,X\right) =T^{\nabla }(Z,X)$
if and only if $\left( {{\nabla }_{Z}^{\ast }}J\right) X=\left( {{\nabla }%
_{X}^{\ast }}J\right) Z$.

$iv)$ From (\ref{GC1}), we can write%
\begin{equation*}
G\left( {{\left( {\nabla }^{\ast }\right) }_{Z}^{J}}X-{{\left( {\nabla }%
^{\ast }\right) }_{X}^{J}}Z,Y\right) =\ G\left( {\nabla }_{Z}X-{\nabla }%
_{X}Z,Y\right)
\end{equation*}%
\begin{equation*}
G(T^{{\left( {\nabla }^{\ast }\right) }^{J}}(Z,X),Y)=G(T^{\nabla }(Z,X),Y)
\end{equation*}%
\begin{equation*}
T^{{\left( {\nabla }^{\ast }\right) }^{J}}(Z,X)=T^{\nabla }(Z,X).
\end{equation*}
\end{proof}

Now we shall state the following proposition without proof, because its
proof is similar to the proof of Proposition 2.10 in \cite{Zhang}.

\begin{proposition}
\noindent \label{pr2}Let $\nabla $ be a linear connection on $(M,J,g,G)$.
Then the following statements are equivalent:

$i)$ $(\nabla ,G)$ is a Codazzi pair

$ii)$ $({\nabla }^{\dagger },G)$ is a Codazzi pair,

$iii)$ $F^{\dagger }\left( X,Y,Z\right) =\left( {\nabla }_{Z}^{\dagger
}G\right) \left( X,Y\right) $ is totally symmetric,

$iv)$ $T^{\nabla }=T^{{\nabla }^{\dagger }}.$
\end{proposition}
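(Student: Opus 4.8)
The plan is to follow the same pattern as the proof of Proposition \ref{pr1}, but now conjugating with respect to $G$ instead of $g$. The key technical tool is the $G$-conjugation identity $ZG(X,Y)=G(\nabla_Z X,Y)+G(X,\nabla_Z^{\dagger}Y)$ together with its involutivity $(\nabla^{\dagger})^{\dagger}=\nabla$, and the purity identity $G(JX,Y)=G(X,JY)$ (which follows from $g(JX,Y)=g(X,JY)$ since $G=g\circ J$ and $J^2=-\mathrm{id}$). I would establish the equivalences in a cycle: $i)\Rightarrow iii)\Rightarrow ii)\Rightarrow iv)\Rightarrow i)$, or more economically prove $i)\Leftrightarrow iv)$ directly and $i)\Leftrightarrow iii)$ directly, then note $iii)\Leftrightarrow ii)$.

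For $i)\Leftrightarrow iv)$, I would expand $(\nabla_Z G)(X,Y)=(\nabla_X G)(Z,Y)$ using the definition of covariant derivative of a $(0,2)$-tensor, rewrite the terms $ZG(X,Y)$ and $XG(Z,Y)$ via the $G$-conjugation formula to introduce $\nabla^{\dagger}$, and collect terms to obtain an identity of the shape $G(\nabla_Z^{\dagger}X-\nabla_X^{\dagger}Z,Y)=G(\nabla_Z X-\nabla_X Z,Y)$ plus curvature-of-$J$-type correction terms. Since $G$ is nondegenerate, this yields $\nabla_Z^{\dagger}X-\nabla_X^{\dagger}Z=\nabla_Z X-\nabla_X Z$, i.e. $T^{\nabla^{\dagger}}(Z,X)=T^{\nabla}(Z,X)$, precisely when the Codazzi condition for $(\nabla,G)$ holds; this is the exact $G$-analogue of the computation leading to \eqref{GC1} in Proposition \ref{pr1}. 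The equivalence $i)\Leftrightarrow iii)$ is almost immediate: the symmetry of $G$ gives symmetry of $F^{\dagger}$ in its first two slots, and the Codazzi condition $(\nabla^{\dagger}_Z G)(X,Y)=(\nabla^{\dagger}_X G)(Z,Y)$ for $(\nabla^{\dagger},G)$ supplies the remaining transposition, so total symmetry of $F^{\dagger}$ is equivalent to $(\nabla^{\dagger},G)$ being a Codazzi pair. Finally $i)\Leftrightarrow ii)$ should follow by applying the result already obtained to the pair $(\nabla^{\dagger},G)$ and using $(\nabla^{\dagger})^{\dagger}=\nabla$, together with the symmetry between $\nabla$ and $\nabla^{\dagger}$ in the defining relation.

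The main obstacle I anticipate is the bookkeeping in the $i)\Leftrightarrow iv)$ step: one must be careful that the "correction terms" involving $\nabla^{\dagger}J$ or $\nabla J$ really do drop out or combine cleanly, rather than leaving a residual tensor that would break the equivalence. In the $g$-case of Proposition \ref{pr1} these terms reorganized into $J^{-1}\{(\nabla_Z^{\ast}J)X-(\nabla_X^{\ast}J)Z\}$, which is why the clean statement there was $T^{\nabla}=T^{\nabla^{\ast}}$ \emph{iff} $(\nabla^{\ast},J)$ is Codazzi. For the $G$-conjugation, because $G$ itself is built from $J$, these $J$-derivative terms should instead cancel outright (the twin metric "absorbs" the $J$), leaving the unconditional equivalence $i)\Leftrightarrow iv)$; verifying this cancellation carefully is the heart of the argument and is exactly the point where the proof parallels Proposition 2.10 of \cite{Zhang}, which is why the authors omit it. The remaining implications are formal consequences of nondegeneracy of $G$, symmetry of $G$, and involutivity of $\dagger$.
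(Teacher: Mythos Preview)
Your plan is correct and matches the paper's intent: the authors omit the proof precisely because it is the standard argument for a metric and its conjugate connection (Proposition~2.10 of \cite{Zhang}), applied verbatim with $G$ in place of $g$.

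One clarification: the ``main obstacle'' you anticipate does not actually arise. Because $\nabla^{\dagger}$ is the $G$-conjugate (not the $g$-conjugate), the almost complex structure $J$ never enters the computation for $i)\Leftrightarrow iv)$. Concretely, from $ZG(Y,X)=G(\nabla_Z Y,X)+G(Y,\nabla_Z^{\dagger}X)$ and symmetry of $G$ one gets directly
\[
(\nabla_Z G)(X,Y)=G\bigl((\nabla_Z^{\dagger}-\nabla_Z)X,\,Y\bigr),
\]
so the Codazzi condition $(\nabla_Z G)(X,Y)=(\nabla_X G)(Z,Y)$ is equivalent, by nondegeneracy of $G$, to $\nabla_Z^{\dagger}X-\nabla_X^{\dagger}Z=\nabla_Z X-\nabla_X Z$, i.e.\ $T^{\nabla^{\dagger}}=T^{\nabla}$. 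There are no $(\nabla J)$-type correction terms to cancel; the contrast with Proposition~\ref{pr1}$\,iii)$ is that there one conjugates with respect to $g$ while the Codazzi condition involves $G=g\circ J$, which is what produces the residual $J^{-1}\{(\nabla_Z^{\ast}J)X-(\nabla_X^{\ast}J)Z\}$. Here metric and conjugation match, so the argument is the clean classical one. The remaining equivalences follow exactly as you describe, via involutivity of $\dagger$ and symmetry of $G$.
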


\noindent\ As a corollary to Proposition \ref{pr1} and \ref{pr2}, we obtain
the following conclusion.

\begin{corollary}
Let $(M,J,g,G)$ be an almost anti-Hermitian manifold. ${\nabla }^{\ast }$
and ${\nabla }^{\dagger }$ denote respectively $g-$conjugation and $G-$%
conjugation of a linear connection $\nabla $ on $M$. If $(\nabla ,G)$ and ${%
\left( {\nabla }^{\ast },J\right) }$ are Codazzi pairs, then $T^{\nabla }=T^{%
{\nabla }^{\ast }}=T^{{\nabla }^{\dagger }}.$
\end{corollary}

\noindent

\subsection{The Codazzi pair $(\protect\nabla ,J)$}

\begin{proposition}
Let $\nabla $ be a linear connection on $(M,J,g,G)$. ${\nabla }^{\dagger }$
denote $G-$conjugation of $\nabla $ on $M$. Under the assumption that $%
(\nabla ,G)$ being a Codazzi pair, $({\nabla }^{\dagger },J)$ is a Codazzi
pair if and only if $({\nabla ,g)}$ is so.
\end{proposition}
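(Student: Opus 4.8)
The plan is to convert both Codazzi conditions in the statement into equalities of torsion tensors, and then to close the loop using Theorem 1 and Proposition \ref{pr2}. The two ingredients I would record first are the following general facts about an arbitrary linear connection $\widetilde{\nabla}$ on $(M,J,g,G)$: (a) $(\widetilde{\nabla},g)$ is a Codazzi pair if and only if $T^{\widetilde{\nabla}}=T^{\widetilde{\nabla}^{\ast}}$; and (b) $(\widetilde{\nabla},J)$ is a Codazzi pair if and only if $T^{\widetilde{\nabla}}=T^{\widetilde{\nabla}^{J}}$.

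For (b) I would write $(\widetilde{\nabla}_{Z}J)X=\widetilde{\nabla}_{Z}(JX)-J\widetilde{\nabla}_{Z}X$ and note that, since $\widetilde{\nabla}^{J}_{Z}X=J^{-1}(\widetilde{\nabla}_{Z}(JX))$, this equals $J(\widetilde{\nabla}^{J}_{Z}X-\widetilde{\nabla}_{Z}X)$. Because $J$ is invertible, the Codazzi symmetry $(\widetilde{\nabla}_{Z}J)X=(\widetilde{\nabla}_{X}J)Z$ is equivalent to the difference tensor $\widetilde{\nabla}^{J}-\widetilde{\nabla}$ being symmetric in its two lower arguments, and adding and subtracting the bracket $[Z,X]$ turns this into $T^{\widetilde{\nabla}^{J}}(Z,X)=T^{\widetilde{\nabla}}(Z,X)$. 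For (a) I would run the same argument starting from the identity $F(X,Y,Z)=(\widetilde{\nabla}_{Z}g)(X,Y)=g(X,(\widetilde{\nabla}^{\ast}-\widetilde{\nabla})_{Z}Y)$ from the discussion preceding Subsection 3.1: symmetry of $g$ already forces $F(X,Y,Z)=F(Y,X,Z)$, so $(\widetilde{\nabla},g)$ being Codazzi is equivalent to total symmetry of $F$, which by nondegeneracy of $g$ says that $\widetilde{\nabla}^{\ast}-\widetilde{\nabla}$ is symmetric in its lower arguments, i.e. $T^{\widetilde{\nabla}}=T^{\widetilde{\nabla}^{\ast}}$.

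With (a) and (b) in hand the argument is a short chain. Applying (b) with $\widetilde{\nabla}=\nabla^{\dagger}$, the pair $(\nabla^{\dagger},J)$ is Codazzi if and only if $T^{\nabla^{\dagger}}=T^{(\nabla^{\dagger})^{J}}$; and Theorem 1 gives $(\nabla^{\dagger})^{J}=\nabla^{\ast}$, so this becomes $T^{\nabla^{\dagger}}=T^{\nabla^{\ast}}$. On the other hand the standing hypothesis that $(\nabla,G)$ is a Codazzi pair gives $T^{\nabla}=T^{\nabla^{\dagger}}$ by Proposition \ref{pr2}. Combining, $(\nabla^{\dagger},J)$ is a Codazzi pair if and only if $T^{\nabla}=T^{\nabla^{\ast}}$, which by (a) with $\widetilde{\nabla}=\nabla$ is exactly the statement that $(\nabla,g)$ is a Codazzi pair.

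The step that carries the real content is the verification of (a) and (b), where one must translate ``the cubic form $F$, respectively the tensor $\widetilde{\nabla}J$, is (totally) symmetric'' into ``the relevant difference tensor is symmetric in its lower indices,'' carefully using the symmetry built in by $g$ in case (a) and the invertibility of $J$ in case (b). After that the remainder is formal bookkeeping with the Klein-group relations of Theorem 1 and with Proposition \ref{pr2}, and I do not anticipate any further obstacle.
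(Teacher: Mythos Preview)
Your argument is correct, and it takes a genuinely different route from the paper's own proof. The paper proceeds by a single direct computation: it expands $G\big((\nabla^{\dagger}_{Z}J)X-(\nabla^{\dagger}_{X}J)Z,\,Y\big)$ using the defining relation of $\nabla^{\dagger}$, the purity of $g$ under $J$, and the equality $T^{\nabla}=T^{\nabla^{\dagger}}$ supplied by Proposition~\ref{pr2}, and arrives termwise at $(\nabla_{Z}g)(X,Y)-(\nabla_{X}g)(Z,Y)$. You instead package the work into the two general lemmas (a) and (b), each converting a Codazzi condition into an equality of torsions, and then close the loop with the Klein-group identity $(\nabla^{\dagger})^{J}=\nabla^{\ast}$ from Theorem~1 together with Proposition~\ref{pr2}. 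What your approach buys is a clean conceptual picture: both sides of the equivalence become the single torsion identity $T^{\nabla}=T^{\nabla^{\ast}}$, and the role of the hypothesis $(\nabla,G)$ Codazzi is isolated as the bridge $T^{\nabla}=T^{\nabla^{\dagger}}$. The paper's direct calculation, on the other hand, avoids invoking Theorem~1 and your lemma (b), trading structural bookkeeping for a self-contained manipulation that makes the dependence on the anti-Hermitian identity $G=g\circ J$ explicit at each step.
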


\begin{proof}
Using the definition of $G-$conjugation and $T^{\nabla }=T^{{\nabla }%
^{\dagger }}$, we find%
\begin{equation*}
G\left( ({\nabla }_{Z}^{\dagger }J)X-({\nabla }_{X}^{\dagger }J)Z,Y\right)
=G({\nabla }_{Z}^{\dagger }JX-{{J}\nabla }_{Z}^{\dagger }X,Y)-G({\nabla }%
_{X}^{\dagger }JZ-{{J}\nabla }_{X}^{\dagger }Z,Y)
\end{equation*}%
\begin{eqnarray*}
&=&ZG\left( JX,Y\right) -G\left( JX,{\nabla }_{Z}Y\right) -G\left( {{J}%
\nabla }_{Z}^{\dagger }X,Y\right) -XG\left( JZ,Y\right) \\
&&+G\left( JZ,{\nabla }_{X}Y\right) +G({{J}\nabla }_{X}^{\dagger }Z,Y)
\end{eqnarray*}%
\begin{eqnarray*}
&=&ZG\left( JX,Y\right) -G\left( JX,{\nabla }_{Z}Y\right) -XG\left(
JZ,Y\right) +G\left( JZ,{\nabla }_{X}Y\right) \\
&&+G\left( {{J(}\nabla }_{X}^{\dagger }Z-{\nabla }_{Z}^{\dagger }X-\left[ Z,X%
\right] )+J\left[ Z,X\right] ,Y\right)
\end{eqnarray*}%
\begin{eqnarray*}
&=&ZG\left( JX,Y\right) -G\left( JX,{\nabla }_{Z}Y\right) -XG\left(
JZ,Y\right) +G\left( JZ,{\nabla }_{X}Y\right) \\
&&+G\left( {{J(}\nabla }_{X}Z-{\nabla }_{Z}X-\left[ Z,X\right] )+J\left[ Z,X%
\right] ,Y\right)
\end{eqnarray*}%
\begin{eqnarray*}
&=&-Zg\left( X,Y\right) +g\left( X,{\nabla }_{Z}Y\right) +Xg\left( Z,Y\right)
\\
&&-g\left( Z,{\nabla }_{X}Y\right) +g\left( {\nabla }_{Z}X,Y\right) -g({%
\nabla }_{X}Z,Y) \\
&=&({\nabla }_{Z}g)(X,Y)-({\nabla }_{X}g)(Z,Y).
\end{eqnarray*}
\end{proof}

Now we consider the ${\Phi }-$operator (or Tachibana operator \cite%
{Tachibana}) applied to the anti-Hermitian metric $g$:%
\begin{equation}
({\Phi }_{J}g)(X,Y,Z)=(L_{JX}g-L_{X}(g\circ J))(Y,Z).  \label{GC3}
\end{equation}%
Because of the fact that the twin metric $G$ on an almost anti-Hermitian
manifold $(M,J,g)$ is an anti-Hermitian metric, we can apply the ${\Phi }-$%
operator to the twin metric $G$: \cite{Salimov3}%
\begin{eqnarray}
({\Phi }_{J}G)(X,Y,Z) &=&(L_{JX}G-L_{X}(G\circ J))(Y,Z)  \label{GC4} \\
&=&({\Phi }_{J}g)(X,JY,Z)+g(N_{J}(X,Y),Z).  \notag
\end{eqnarray}

\begin{proposition}
\label{pr3}Let $\nabla $ be a torsion-free linear connection on $(M,J,g,G)$.
If $({\nabla },J)$ is a Codazzi pair, then%
\begin{equation*}
({\Phi }_{J}G)(X,Y,Z)=({\Phi }_{J}g)(X,JY,Z)=\left( {\nabla }_{JX}G\right)
\left( Y,Z\right) -\left( {\nabla }_{X}g\right) \left( JY,JZ\right) .
\end{equation*}
\end{proposition}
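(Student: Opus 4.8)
The plan is to expand $({\Phi}_J G)(X,Y,Z)$ using formula (\ref{GC4}), and then rewrite each surviving term through the torsion-free connection $\nabla$, exploiting the Codazzi hypothesis $(\nabla_Z J)X = (\nabla_X J)Z$ at the crucial moment. There are two equalities to establish. The first, $({\Phi}_J G)(X,Y,Z) = ({\Phi}_J g)(X,JY,Z)$, by (\ref{GC4}) amounts to showing that the Nijenhuis term $g(N_J(X,Y),Z)$ vanishes. The second is the identification of $({\Phi}_J g)(X,JY,Z)$ with the connection expression on the right.

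\medskip

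\textbf{Vanishing of the Nijenhuis term.} First I would recall that for a torsion-free $\nabla$ one has the standard formula
\begin{equation*}
N_J(X,Y) = (\nabla_{JX}J)Y - (\nabla_{JY}J)X - J(\nabla_X J)Y + J(\nabla_Y J)X,
\end{equation*}
obtained by replacing Lie brackets with $\nabla_\bullet\bullet - \nabla_\bullet\bullet$ in the definition $N_J(X,Y) = [JX,JY] - J[JX,Y] - J[X,JY] - [X,Y]$ and using $J^2 = -\mathrm{id}$. Now apply the Codazzi identity: $(\nabla_{JX}J)Y = (\nabla_Y J)(JX)$ and $(\nabla_{JY}J)X = (\nabla_X J)(JY)$. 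Differentiating $J^2 = -\mathrm{id}$ gives $(\nabla_W J)(JV) = -J(\nabla_W J)V$, so $(\nabla_{JX}J)Y = -J(\nabla_Y J)X$ and $(\nabla_{JY}J)X = -J(\nabla_X J)Y$. Substituting these into the displayed formula for $N_J$ makes all four terms cancel in pairs, so $N_J \equiv 0$. This is the cleanest route; the only mild subtlety is keeping the signs straight when commuting $J$ past $\nabla J$.

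\medskip

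\textbf{Rewriting $({\Phi}_J g)(X,JY,Z)$.} For this I would again trade Lie derivatives for the connection. Since $\nabla$ is torsion-free, $(L_X g)(Y,Z) = (\nabla_X g)(Y,Z) + g(\nabla_Y X, Z) + g(Y, \nabla_Z X)$, and similarly for $L_{JX}$ applied to $G = g\circ J$. Writing out (\ref{GC3}) with $Y$ replaced by $JY$,
\begin{equation*}
({\Phi}_J g)(X,JY,Z) = (L_{JX}g)(JY,Z) - (L_X G)(JY,Z),
\end{equation*}
I expect the terms involving $\nabla_\bullet X$ (the "transport" parts of the Lie derivatives) to reorganize — using $G(\cdot,\cdot) = g(J\cdot,\cdot)$, the purity $g(JU,V) = g(U,JV)$, and once more the Codazzi relation — into exactly $-(\nabla_X g)(JY,JZ)$, while the genuinely differentiated metric term $(\nabla_{JX}g)(JY,Z)$ rewrites as $(\nabla_{JX}G)(Y,Z)$ after moving $J$ onto the second slot. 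Concretely $(\nabla_{JX}G)(Y,Z) = (\nabla_{JX}(g\circ J))(Y,Z) = (\nabla_{JX}g)(JY,Z) + g((\nabla_{JX}J)Y, Z)$, and the extra term $g((\nabla_{JX}J)Y,Z)$ should be absorbed by the bookkeeping above via the Codazzi symmetry.

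\medskip

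\textbf{Main obstacle.} The first equality is essentially immediate once $N_J = 0$ is in hand, so the real work is the second: carefully tracking the six "Christoffel-type" terms that arise from the two Lie derivatives and showing that, after applying purity and the Codazzi identity $(\nabla_\bullet J)\bullet$ is symmetric, everything collapses to the two clean terms $(\nabla_{JX}G)(Y,Z) - (\nabla_X g)(JY,JZ)$. I expect no conceptual difficulty — only the risk of a sign error or a miscounted term — so the plan is to organize the computation by first substituting the torsion-free Lie-derivative formulas, then grouping terms by whether they carry $\nabla J$ or $\nabla g$, and finally invoking the Codazzi hypothesis exactly once in each group.
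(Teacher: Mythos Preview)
Your proposal is correct and follows essentially the same route as the paper: the paper likewise expands $(\Phi_J g)(X,JY,Z)$ by replacing Lie brackets with $\nabla$-expressions via torsion-freeness, then uses purity and the Codazzi symmetry $(\nabla_Z J)X=(\nabla_X J)Z$ to collapse the resulting terms to $(\nabla_{JX}G)(Y,Z)-(\nabla_X g)(JY,JZ)$, and separately observes that $N_J=0$ under the Codazzi hypothesis so that (\ref{GC4}) gives the first equality. The only cosmetic difference is that the paper carries out the long term-by-term expansion first and records the Nijenhuis vanishing afterwards, whereas you dispose of $N_J$ first; your Nijenhuis formula is also written in the equivalent ``un-$J$-twisted'' form.
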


\begin{proof}
Using ${\nabla }_{X}Z-{\nabla }_{Z}X=\left[ Z,X\right] $, from (\ref{GC3})
we get%
\begin{equation*}
\left( {\Phi }_{J}g\right) \left( X,JY,Z\right) =(L_{JX}g-\left(
L_{X}goJ\right) \left( JY,Z\right)
\end{equation*}%
\begin{equation*}
=\left( L_{JX}g\right) \left( JY,Z\right) -\left( L_{X}goJ\right) \left(
JY,Z\right)
\end{equation*}%
\begin{eqnarray*}
&=&JXg\left( JY,Z\right) -g\left( L_{JX}JY,Z\right) -g\left( {JY,L}%
_{JX}Z\right) -XgoJ\left( JY,Z\right) \\
&&+goJ\left( L_{X}JY,Z\right) +goJ\left( JY,L_{X}Z\right)
\end{eqnarray*}%
\begin{eqnarray*}
&=&JXg\left( JY,Z\right) -g\left( \left[ JX,JY\right] ,Z\right) -g\left( JY,%
\left[ JX,Z\right] \right) -XgoJ\left( JY,Z\right) \\
&&+goJ\left( \left[ X,JY\right] ,Z\right) +goJ\left( JY,\left[ X,Z\right]
\right)
\end{eqnarray*}%
\begin{eqnarray*}
&=&JXg\left( JY,Z\right) -g\left( {\nabla }_{JX}JY-{\nabla }_{JY}JX,Z\right)
-g\left( JY,{\nabla }_{JX}Z-{\nabla }_{Z}JX\right) \\
&&-XgoJ\left( JY,Z\right) +goJ\left( {\nabla }_{X}JY-{\nabla }%
_{JY}X,Z\right) +goJ\left( JY,{\nabla }_{X}Z-{\nabla }_{Z}X\right)
\end{eqnarray*}%
\begin{eqnarray*}
&=&JXg\left( JY,Z\right) -g\left( \left( {\nabla }_{JX}J\right) Y+J{\nabla }%
_{JX}Y-\left( {\nabla }_{JY}J\right) X-J{\nabla }_{JY}X,Z\right) \\
&&-g\left( JY,{\nabla }_{JX}Z-\left( {\nabla }_{Z}J\right) X-J{\nabla }%
_{Z}X\right) -Xg\left( JY,JZ\right) \\
&&+g\left( \left( {\nabla }_{X}J\right) Y+J{\nabla }_{X}Y-{\nabla }%
_{JY}X,JZ\right) +g\left( JY,J{\nabla }_{X}Z-J{\nabla }_{Z}X\right)
\end{eqnarray*}%
\begin{eqnarray*}
&=&JXg\left( JY,Z\right) -g\left( \left( {\nabla }_{JX}J\right) Y,Z\right)
-g\left( J{\nabla }_{JX}Y,Z\right) \\
&&+g\left( \left( {\nabla }_{JY}J\right) X,Z\right) +g\left( J{\nabla }%
_{JY}X,Z\right) -g\left( JY,{\nabla }_{JX}Z\right) +g\left( JY,\left( {%
\nabla }_{Z}J\right) X\right) \\
&&+g\left( JY,J{\nabla }_{Z}X\right) -Xg\left( JY,JZ\right) +g\left( \left( {%
\nabla }_{X}J\right) Y,JZ\right) +g\left( J{\nabla }_{X}Y,JZ\right) \\
&&-g\left( {\nabla }_{JY}X,JZ\right) +g\left( JY,J{\nabla }_{X}Z\right)
-g\left( JY,J{\nabla }_{Z}X\right) .
\end{eqnarray*}%
\bigskip \noindent By virtue of the purity of $g$ relative to $J$, $({\nabla
}_{Z}J)X=({\nabla }_{X}J)Z$, the last relation reduces to
\begin{eqnarray*}
&=&JXg\left( JY,Z\right) -g\left( \left( {\nabla }_{JX}J\right) Y,Z\right)
-g\left( J{\nabla }_{JX}Y,Z\right) +g\left( \left( {\nabla }_{JY}J\right)
X,Z\right) \\
&&+g\left( J{\nabla }_{JY}X,Z\right) -g\left( JY,{\nabla }_{JX}Z\right)
+g\left( JY,\left( {\nabla }_{Z}J\right) X\right) \\
&&+g\left( JY,J{\nabla }_{Z}X\right) -Xg\left( JY,JZ\right) +g\left( \left( {%
\nabla }_{X}J\right) Y,JZ\right) \\
&&+g\left( J{\nabla }_{X}Y,JZ\right) -g\left( J{\nabla }_{JY}X,Z\right)
+g\left( JY,J{\nabla }_{X}Z\right) -g\left( JY,J{\nabla }_{Z}X\right)
\end{eqnarray*}%
\begin{eqnarray*}
&=&JXg\left( JY,Z\right) -g\left( J{\nabla }_{JX}Y,Z\right) -g\left( JY,{%
\nabla }_{JX}Z\right) +g\left( JY,\left( {\nabla }_{Z}J\right) X\right) \\
&&-Xg\left( JY,JZ\right) +g\left( \left( {\nabla }_{X}J\right) Y,JZ\right)
+g\left( J{\nabla }_{X}Y,JZ\right) +g\left( JY,J{\nabla }_{X}Z\right)
\end{eqnarray*}%
\begin{eqnarray*}
&=&JXG\left( Y,Z\right) -G\left( {\nabla }_{JX}Y,Z\right) -G\left( Y,{\nabla
}_{JX}Z\right) -Xg\left( JY,JZ\right) \\
&&+g\left( {\nabla }_{X}JY,JZ\right) +g\left( JY,{\nabla }_{Z}JX\right)
\end{eqnarray*}%
\begin{equation}
=\left( {\nabla }_{JX}G\right) \left( Y,Z\right) -\left( {\nabla }%
_{X}g\right) \left( JY,JZ\right) .  \label{GC5}
\end{equation}

Relative to the torsion-free connection $\nabla $, the Nijenhuis tensor has
the following form:%
\begin{equation*}
N_{J}\left( X,Y\right) =-J\{(\nabla _{JY}J)JX-(\nabla _{JX}J)JY\}+J\{(\nabla
_{Y}J)X-(\nabla _{X}J)Y\}
\end{equation*}%
From here, it is easy to $N_{J}\left( X,Y\right) =0$ because $({\nabla },J)$
is a Codazzi pair. Hence, taking account of (\ref{GC4}) and (\ref{GC5}) we
have%
\begin{equation*}
\left( {\Phi }_{J}G\right) \left( X,Y,Z\right) =\left( {\Phi }_{J}g\right)
\left( X,JY,Z\right) =\left( {\nabla }_{JX}G\right) \left( Y,Z\right)
-\left( {\nabla }_{X}g\right) \left( JY,JZ\right) .
\end{equation*}
\end{proof}

As is well known, the anti-K\"{a}hler condition ($\nabla ^{g}J=0$) is
equivalent to $%
\mathbb{C}
$-holomorphicity (analyticity) of the anti-Hermitian metric $g$, that is, ${%
\Phi }_{J}g=0$. If the anti-Hermitian metric $g$ is $%
\mathbb{C}
-$holomorphic, then the triple $(M,J,g)$ is an anti-K\"{a}hler manifold \cite%
{IscanSalimov:2009}.

\begin{theorem}
Let $\nabla $ be a torsion-free linear connection on $(M,J,g,G)$. Under the
assumption that $({\nabla },J)$ being a Codazzi pair, $(M,J,g,G)$ is an
anti-K\"{a}hler manifold if and only if the following condition is fulfilled:%
\begin{equation*}
\left( {\nabla }_{JX}G\right) \left( Y,Z\right) =\left( {\nabla }%
_{X}g\right) \left( JY,JZ\right) .
\end{equation*}
\end{theorem}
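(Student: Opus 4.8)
The plan is to obtain the theorem as a direct corollary of Proposition~\ref{pr3} together with the characterization of the anti-K\"{a}hler condition recalled just above the statement. First I would record that, for the almost anti-Hermitian manifold $(M,J,g,G)$, being anti-K\"{a}hler is equivalent to the $\mathbb{C}$-holomorphicity of $g$, i.e. to ${\Phi}_{J}g=0$; unwinding the definition, this says $({\Phi}_{J}g)(X,Y,Z)=0$ for all vector fields $X,Y,Z$ on $M$. So the whole task is to translate the pointwise identity $({\Phi}_{J}g)(X,Y,Z)=0$ into the stated condition $({\nabla}_{JX}G)(Y,Z)=({\nabla}_{X}g)(JY,JZ)$.

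The translation proceeds in two moves. Since $J$ is pointwise invertible (indeed $J^{-1}=-J$ because $J^{2}=-\mathrm{id}$), as $Y$ ranges over all vector fields so does $JY$; hence ${\Phi}_{J}g=0$ holds if and only if $({\Phi}_{J}g)(X,JY,Z)=0$ for all $X,Y,Z$. Now, because $\nabla$ is torsion-free and $(\nabla,J)$ is a Codazzi pair, Proposition~\ref{pr3} applies and gives the identity
\begin{equation*}
({\Phi}_{J}g)(X,JY,Z)=\left({\nabla}_{JX}G\right)(Y,Z)-\left({\nabla}_{X}g\right)(JY,JZ)
\end{equation*}
for all $X,Y,Z$. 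Therefore $({\Phi}_{J}g)(X,JY,Z)=0$ for all $X,Y,Z$ if and only if $({\nabla}_{JX}G)(Y,Z)=({\nabla}_{X}g)(JY,JZ)$ for all $X,Y,Z$. Chaining the equivalences (anti-K\"{a}hler $\Leftrightarrow$ ${\Phi}_{J}g=0$ $\Leftrightarrow$ $({\Phi}_{J}g)(X,JY,Z)\equiv0$ $\Leftrightarrow$ the displayed condition) proves the theorem.

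There is essentially no hard step here once Proposition~\ref{pr3} is in hand; the theorem is just a repackaging of that computation. The one point I would state explicitly is the legitimacy of the change of variable $Y\mapsto JY$, i.e. that quantifying the tensorial identity over $JY$ is the same as quantifying it over $Y$, so that the displayed pointwise condition is genuinely \emph{equivalent} to ${\Phi}_{J}g=0$ rather than merely implied by it. (Alternatively, one can bypass the substitution by combining the relation $({\Phi}_{J}G)(X,Y,Z)=({\Phi}_{J}g)(X,JY,Z)$ from Proposition~\ref{pr3} with the fact that $\nabla^{g}J=0$ forces $N_{J}=0$, so that ${\Phi}_{J}G=0\Leftrightarrow{\Phi}_{J}g=0$; but the route above is the most economical.)
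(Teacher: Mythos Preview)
Your proof is correct and follows essentially the same approach as the paper: the theorem is obtained as a direct consequence of Proposition~\ref{pr3} combined with the equivalence of the anti-K\"{a}hler condition with ${\Phi}_{J}g=0$. The paper's proof simply states it is a direct consequence of Proposition~\ref{pr3}, while you have helpfully made explicit the substitution $Y\mapsto JY$ (legitimate since $J$ is invertible) that turns the vanishing of $({\Phi}_{J}g)(X,JY,Z)$ into the stated condition.
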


\begin{proof}
The statement is a direct consequence of Proposition \ref{pr3}.
\end{proof}

\section{\noindent $J-$invariant Linear Connections}

Given arbitrary linear connection $\nabla $ on an almost complex manifold $%
(M,J)$, if the following condition is satisfied:%
\begin{equation*}
{\nabla }_{X}JY={J\nabla }_{X}Y
\end{equation*}%
for any vector fields $X,Y$ on $M$, then $\nabla $ is called a $J-$invariant
linear connection on $M$.

\begin{proposition}
\label{pr4} Let $\nabla $ be a linear connection on $(M,J,g,G)$. ${\nabla }%
^{\ast }$ and ${\nabla }^{\dagger }$ denote respectively $g-$conjugation and
$G-$conjugation of $\nabla $ on $M$. Then

$i)$ $\nabla $ is $J-$invariant if and only if ${\nabla }^{\ast }$ is so.

$ii)$ $\nabla $ is $J-$invariant if and only if ${\nabla }^{\dagger }$ is so.
\end{proposition}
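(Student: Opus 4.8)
The plan is to prove both equivalences by unwinding the defining relations of the conjugate connections. For part $i)$, I would start from the defining identity
\[
Zg(X,Y)=g(\nabla_Z X,Y)+g(X,\nabla^{\ast}_Z Y),
\]
replace $Y$ by $JY$, and use the purity condition $g(X,JY)=g(JX,Y)$ to rewrite $Zg(X,JY)=Zg(JX,Y)$; expanding the right-hand side again via the defining identity for $\nabla^{\ast}$ and comparing yields a clean relation between $\nabla^{\ast}_Z(JY)$ and $J\nabla^{\ast}_Z Y$. Concretely, one gets $g(X,\nabla^{\ast}_Z(JY)) - g(X, J\nabla^{\ast}_Z Y)$ expressed in terms of $g(\nabla_Z X - \text{(something involving }J\text{)},Y)$, which forces $(\nabla^{\ast}_Z J)Y = 0$ for all $Y$ precisely when $(\nabla_Z J)Y = 0$ for all $Y$, since $g$ is nondegenerate. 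This shows $\nabla J = 0 \iff \nabla^{\ast} J = 0$, i.e. $J$-invariance of $\nabla$ is equivalent to $J$-invariance of $\nabla^{\ast}$.

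For part $ii)$ I would run the identical argument with $G$ in place of $g$, starting from
\[
ZG(X,Y)=G(\nabla_Z X,Y)+G(X,\nabla^{\dagger}_Z Y).
\]
Here the key algebraic fact is the purity of $G$ relative to $J$: since $G(X,Y)=g(JX,Y)$ and $g$ is pure, one checks $G(JX,Y)=g(J^2X,Y)=-g(X,Y)=-g(JX,JY)\cdot(\ldots)$—more simply, $G(JX,Y)=g(JJX,Y)=-g(X,Y)$ and $G(X,JY)=g(JX,JY)=-g(X,Y)$, so $G(JX,Y)=G(X,JY)$, i.e. $G$ is also pure with respect to $J$ (this is already used implicitly in the paper via (\ref{GC4}) and the earlier computations). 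With this, the same substitution $Y\mapsto JY$ and comparison of the two expansions of $ZG(X,JY)=ZG(JX,Y)$ gives $(\nabla^{\dagger}_Z J)Y=0 \iff (\nabla_Z J)Y=0$.

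Alternatively — and this is probably the slicker route — I would invoke Theorem 1: since $(\nabla^{\ast})^{J}=\nabla^{\dagger}$ and $(\nabla^{J})^{\ast}=\nabla^{\dagger}$, one can deduce $ii)$ from $i)$ once one knows that $J$-invariance is preserved under $J$-conjugation. But note that $\nabla$ is $J$-invariant exactly when $\nabla^{J}=\nabla$ (since $\nabla^{J}(X,Y)=J^{-1}(\nabla_X JY)$ equals $\nabla_X Y$ iff $\nabla_X JY = J\nabla_X Y$), so $J$-invariance is automatically a $J$-conjugation-fixed condition. Combining this observation with part $i)$: $\nabla$ is $J$-invariant $\iff \nabla^{\ast}$ is $\iff (\nabla^{\ast})^{J}=\nabla^{\dagger}$ is, giving $ii)$ immediately. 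I would present part $i)$ by direct computation and then obtain $ii)$ via this Klein-group shortcut.

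The main obstacle is essentially bookkeeping: one must be careful that the purity of $g$ is genuinely what converts $Zg(X,JY)$ into a form matching the $\nabla^{\ast}$-defining identity, and that no torsion or symmetry hypothesis on $\nabla$ sneaks in — the statement holds for arbitrary linear connections, so the proof must use only the defining relations and the algebraic identities $J^2=-\mathrm{id}$ and purity. There is no deep difficulty; the only place to be vigilant is sign management when passing between $g$, $G$, and $J^{-1}=-J$.
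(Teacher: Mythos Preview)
Your proposal is correct. For part $i)$, your direct computation via the defining relation of $\nabla^{\ast}$ and purity of $g$ is exactly what the paper does: it computes $G\bigl((\nabla^{\ast}_X J)Y,Z\bigr)$ and shows it equals $-G\bigl(Y,(\nabla_X J)Z\bigr)$, which is the same identity you are describing, just packaged with $G$ rather than $g$.

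For part $ii)$, the paper again proceeds by a parallel direct computation, this time using the defining relation of $\nabla^{\dagger}$ and the purity of $G$, arriving at $G\bigl((\nabla^{\dagger}_X J)Y,Z\bigr)=G\bigl((\nabla_X J)Z,Y\bigr)$. Your first suggested route for $ii)$ matches this. Your alternative route via Theorem~1 is genuinely different and more economical: the observation that $J$-invariance of any connection $\widetilde{\nabla}$ is equivalent to $\widetilde{\nabla}^{J}=\widetilde{\nabla}$, combined with $\nabla^{\dagger}=(\nabla^{\ast})^{J}$ and part $i)$, yields $ii)$ with no further calculation. This Klein-group argument avoids repeating the computation and makes transparent why the two conjugations behave symmetrically with respect to $J$-invariance; the paper's direct approach, by contrast, is self-contained and does not rely on Theorem~1.
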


\begin{proof}
$i)$ Using the definition of $g-$conjugation and the purity of $g$ relative
to $J$, we have
\begin{equation*}
G\left( {{\nabla }_{X}^{\ast }}JY-{J{\nabla }_{X}^{\ast }}Y,Z\right)
=g\left( {{\nabla }_{X}^{\ast }}JY,JZ\right) -{g(J{\nabla }_{X}^{\ast }}Y,JZ)
\end{equation*}%
\begin{equation*}
=-Xg\left( Y,Z\right) -g\left( JY,{\nabla }_{X}JZ\right) +Xg\left(
Y,Z\right) -g\left( Y,{\nabla }_{X}Z\right)
\end{equation*}%
\begin{equation*}
=-g\left( JY,{\nabla }_{X}\ JZ\right) +g\left( JY,J{\nabla }_{X}Z\right)
=-G\left( Y,{\nabla }_{X}JZ\right) +G\left( Y,J{\nabla }_{X}Z\right) .
\end{equation*}%
Hence, ${{\nabla }_{X}^{\ast }}JY={J{\nabla }_{X}^{\ast }}Y$ if and only if $%
{\nabla }_{X}\ JZ=J{{\nabla }_{X}\ Z}$.

$ii)$ Similarly, we get\noindent
\begin{equation*}
G\left( {{\nabla }_{X}^{\dagger }}JY-J{{\nabla }_{X}^{\dagger }}Y,Z\right)
=G\left( {{\nabla }_{X}^{\dagger }}JY,Z\right) -G(J{{\nabla }_{X}^{\dagger }}%
Y,Z)
\end{equation*}%
\begin{equation*}
=XG\left( JY,Z\right) -G\left( JY,{\nabla }_{X}Z\right) -XG\left(
Y,JZ\right) +G\left( Y,{\nabla }_{X}\ JZ\right)
\end{equation*}%
\begin{equation*}
=G\left( Y,{\nabla }_{X}JZ\right) -G\left( JY,{\nabla }_{X}Z\right) =G\left(
{\nabla }_{X}\ JZ-{J\nabla }_{X}Z,Y\right)
\end{equation*}%
which gives the result.
\end{proof}

\begin{proposition}
\label{pr5}Let $\nabla $ be a $J-$invariant linear connection on $(M,J,g,G)$%
. ${\nabla }^{\ast }$ and ${\nabla }^{\dagger }$ denote respectively $g-$%
conjugation and $G-$conjugation of $\nabla $ on $M$. The following
statements hold:

$i)$ ${\nabla }^{\dagger }$ coincides with ${\nabla }^{\ast }$,

$ii)$ $(\nabla ,G)$ is a Codazzi pair if and only if $(\nabla ,g)$ is so.
\end{proposition}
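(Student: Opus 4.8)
The plan is to use the $J$-invariance of $\nabla$ to collapse the difference between the two conjugate connections, and then feed that collapse into the Codazzi-pair criterion for $G$ established in Proposition \ref{pr2}.

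For part $i)$, I would start from the defining equations of the two conjugations and rewrite the $g$-conjugation equation in terms of $G$. Since $G(X,Y)=g(JX,Y)$ and, by purity, $g(JX,Y)=g(X,JY)$, one has $Zg(X,Y)=g(\nabla_Z X,Y)+g(X,\nabla^{\ast}_Z Y)$; replacing $X$ by $JX$ gives $ZG(X,Y)=Zg(JX,Y)=g(\nabla_Z(JX),Y)+g(JX,\nabla^{\ast}_Z Y)$. Now invoke $J$-invariance of $\nabla$ to write $\nabla_Z(JX)=J\nabla_Z X$, so the first term becomes $g(J\nabla_Z X,Y)=G(\nabla_Z X,Y)$, and the second term is $g(JX,\nabla^{\ast}_Z Y)=G(X,\nabla^{\ast}_Z Y)$. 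Hence $ZG(X,Y)=G(\nabla_Z X,Y)+G(X,\nabla^{\ast}_Z Y)$, which is exactly the defining equation for $\nabla^{\dagger}$. By uniqueness of the connection satisfying this equation (nondegeneracy of $G$), we conclude $\nabla^{\dagger}=\nabla^{\ast}$.

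For part $ii)$, I would combine $i)$ with the earlier structural results. By Proposition \ref{pr2}, $(\nabla,G)$ is a Codazzi pair if and only if $T^{\nabla}=T^{\nabla^{\dagger}}$; by part $i)$ this is the same as $T^{\nabla}=T^{\nabla^{\ast}}$. On the other hand, the standard relation $(\nabla^{\ast}-\nabla)_Z Y$ being captured by the cubic form $F(X,Y,Z)=g(X,(\nabla^{\ast}-\nabla)_Z Y)$ shows that $\nabla^{\ast}-\nabla$ is a symmetric $(1,2)$-tensor (so $T^{\nabla}=T^{\nabla^{\ast}}$ automatically) precisely when $F$ — equivalently $\nabla g$ — is symmetric in its first and last slots, i.e. when $(\nabla,g)$ is a Codazzi pair. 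Actually the cleanest route is: $(\nabla,g)$ is Codazzi $\iff T^{\nabla}=T^{\nabla^{\ast}}$ (the $g$-analogue of Proposition \ref{pr2}, or a direct computation: $(\nabla_Z g)(X,Y)-(\nabla_X g)(Z,Y)=g((\nabla^{\ast}_Z-\nabla_Z)X-(\nabla^{\ast}_X-\nabla_X)Z,\,\cdot)$ paired suitably, whose vanishing is equivalent to $T^{\nabla^{\ast}}-T^{\nabla}=0$). Chaining the two equivalences through $T^{\nabla}=T^{\nabla^{\ast}}=T^{\nabla^{\dagger}}$ yields the claim.

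The only delicate point is making the "$(\nabla,g)$ Codazzi $\iff T^{\nabla}=T^{\nabla^{\ast}}$" step airtight: one must verify that the difference tensor $K:=\nabla^{\ast}-\nabla$ satisfies $g(K_Z X - K_X Z,Y)=(\nabla_Z g)(X,Y)-(\nabla_X g)(Z,Y)$ after symmetrizing over the roles of $X$ and $Y$ using the symmetry of $g$, so that symmetry of $\nabla g$ (Codazzi) is equivalent to symmetry of $K$ in its two lower arguments, which is equivalent to $T^{\nabla^{\ast}}=T^{\nabla}$. This is a routine unwinding of definitions, entirely parallel to part $iii)$ of Proposition \ref{pr1}, so I would either cite that computation or reproduce its one-line analogue. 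Everything else is formal substitution, so I expect no real obstacle.
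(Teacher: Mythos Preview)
Your argument for part $i)$ is essentially the paper's proof, just run from the $g$-conjugation side rather than the $G$-conjugation side; the logic (use $J$-invariance to turn $J\nabla_Z X$ into $\nabla_Z(JX)$, then read off the other conjugation equation) is identical.

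For part $ii)$ you take a genuinely different route. The paper argues directly: it expands $(\nabla_Z G)(X,Y)-(\nabla_X G)(Z,Y)$ using $G(X,Y)=g(JX,Y)$, invokes purity and $J$-invariance to push the $J$ onto the $Y$-slot, and arrives at $(\nabla_Z g)(X,JY)-(\nabla_X g)(Z,JY)$, whence the equivalence follows since $J$ is invertible. Your approach instead chains the torsion criteria: $(\nabla,G)$ Codazzi $\iff T^{\nabla}=T^{\nabla^{\dagger}}$ (Proposition~\ref{pr2}), then $\nabla^{\dagger}=\nabla^{\ast}$ by $i)$, then $T^{\nabla}=T^{\nabla^{\ast}}\iff (\nabla,g)$ Codazzi. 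The last step is the $g$-analogue of Proposition~\ref{pr2}; your sketch for it is correct once one notes that the difference tensor $K_Z=\nabla^{\ast}_Z-\nabla_Z$ is $g$-self-adjoint (because $(\nabla_Z g)(X,Y)=g(X,K_Z Y)$ is symmetric in $X,Y$), so that $(\nabla_Z g)(X,Y)-(\nabla_X g)(Z,Y)=g(K_Z X-K_X Z,Y)=g(T^{\nabla^{\ast}}(Z,X)-T^{\nabla}(Z,X),Y)$. The paper's computation is shorter and self-contained (no prior propositions needed), while yours has the merit of exhibiting $ii)$ as a formal consequence of $i)$ together with the general torsion--Codazzi correspondence, making the structural role of $J$-invariance clearer.
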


\begin{proof}
$i)$ By the definition of $g-$conjugation, $G-$conjugation and $J-$%
invariance, we have \textbf{\ }
\begin{equation*}
ZG\left( X,Y\right) =G\left( {\nabla }_{Z}X,Y\right) +G\left( X,{\nabla }%
_{Z}^{\dagger }Y\right)
\end{equation*}%
\begin{equation*}
Zg\left( JX,Y\right) =g\left( J{\nabla }_{Z}X,Y\right) +g\left( JX,{\nabla }%
_{Z}^{\dagger }Y\right)
\end{equation*}%
\begin{equation*}
Zg\left( JX,Y\right) -g\left( J{\nabla }_{Z}X,Y\right) =g\left( JX,{\nabla }%
_{Z}^{\dagger }Y\right)
\end{equation*}%
\begin{equation*}
Zg\left( JX,Y\right) -g\left( {\nabla }_{Z}JX,Y\right) =g\left( JX,{\nabla }%
_{Z}^{\dagger }Y\right)
\end{equation*}%
\begin{equation*}
g\left( JX,{{\nabla }_{Z}^{\ast }}Y\right) =g\left( JX,{\nabla }%
_{Z}^{\dagger }Y\right) \iff {{\nabla }^{\ast }}={\nabla }^{\dagger }
\end{equation*}

$ii)$ Using the purity of $g$ relative to $J$, we get
\begin{equation*}
\left( {\nabla }_{Z}G\right) \left( X,Y\right) =\left( {\nabla }_{X}G\right)
\left( Z,Y\right)
\end{equation*}%
\begin{equation*}
Zg\left( JX,Y\right) -g\left( J{\nabla }_{Z}X,Y\right) -g\left( JX,{\nabla }%
_{Z}Y\right) =Xg\left( JZ,Y\right) -g\left( J{\nabla }_{X}Z,Y\right)
-g\left( JZ,{\nabla }_{X}Y\right)
\end{equation*}%
\begin{equation*}
Zg\left( X,JY\right) -g\left( {\nabla }_{Z}X,JY\right) -g\left( X,{J\nabla }%
_{Z}Y\right) =Xg\left( Z,JY\right) -g\left( {\nabla }_{X}Z,JY\right)
-g\left( Z,{J\nabla }_{X}Y\right)
\end{equation*}%
\begin{equation*}
Zg\left( X,JY\right) -g\left( {\nabla }_{Z}X,JY\right) -g\left( X,{\nabla }%
_{Z}JY\right) =Xg\left( Z,JY\right) -g\left( {\nabla }_{X}Z,JY\right)
-g\left( Z,{\nabla }_{X}JY\right)
\end{equation*}%
\begin{equation*}
\left( {\nabla }_{Z}g\right) \left( X,JY\right) =\left( {\nabla }%
_{X}g\right) \left( Z,JY\right) .
\end{equation*}
\end{proof}

For the moment, we consider a torsion-free linear connection $\nabla $ on a
pseudo-Riemannian manifold $(M,g)$. In the case, if $(\nabla ,g)$ is a
Codazzi pair which characterizes what is known to information geometers as
statistical structures, then the manifold $M$ together with a statistical
structure $(\nabla ,g)$ is called a statistical manifold. The notion of
statistical manifold was originally introduced by Lauritzen \cite{Lauritzen}%
. Statistical manifolds are widely studied in affine differential geometry
\cite{Lauritzen,Nomizu} and plays a central role in information geometry.

\begin{theorem}
Let $\nabla $ be a $J-$invariant torsion-free linear connection on $%
(M,J,g,G) $. ${\nabla }^{\dagger }$ and ${\nabla }^{\ast }$ denote
respectively the $G- $conjugation and $g-$conjugation of $\nabla $ on $M$.
If $({\nabla },G)$ is a statistical structure, then the following statements
hold:\noindent

$i)$ $({\nabla }^{\dagger },g)$ is a statistical structure,

$ii)$ $\left( \nabla ,g\right) $ is a statistical structure,

$iii)$ $({\nabla }^{\ast },g)$ is a statistical structure.

Conversely, if any one of the statements $i)-iii)$ is satisfied, then $({%
\nabla },G)$ is a statistical structure.
\end{theorem}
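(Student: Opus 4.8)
The plan is to reduce the entire theorem to a single equivalence already established, namely Proposition~\ref{pr5}$(ii)$, which says that for a $J$-invariant connection $\nabla$, the pair $(\nabla,G)$ is a Codazzi pair if and only if $(\nabla,g)$ is a Codazzi pair. Since $\nabla$ is also torsion-free and $J$-invariant by hypothesis, it is automatic that $\nabla^{\ast}$ and $\nabla^{\dagger}$ are torsion-free: Proposition~\ref{pr4} gives that $\nabla^{\ast}$ and $\nabla^{\dagger}$ are again $J$-invariant, and by Proposition~\ref{pr5}$(i)$ we in fact have $\nabla^{\dagger}=\nabla^{\ast}$, so there is really only \emph{one} conjugate connection to track. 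Thus ``statistical structure'' for any of $\nabla$, $\nabla^{\ast}$, $\nabla^{\dagger}$ means exactly ``torsion-free and forms a Codazzi pair with the indicated metric,'' and the torsion-freeness is never in doubt.

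First I would record the torsion-free claims: $\nabla$ torsion-free plus $J$-invariant forces, via the identity $T^{\nabla^{\ast}}(Z,X)=T^{\nabla}(Z,X)+J^{-1}\{(\nabla^{\ast}_{Z}J)X-(\nabla^{\ast}_{X}J)Z\}$ and the fact (from $J$-invariance of $\nabla^{\ast}$, Proposition~\ref{pr4}) that $\nabla^{\ast}J=0$, that $T^{\nabla^{\ast}}=T^{\nabla}=0$; and likewise $T^{\nabla^{\dagger}}=0$, which is immediate once $\nabla^{\dagger}=\nabla^{\ast}$. Next, for $(i)$: starting from $(\nabla,G)$ being a statistical structure, Proposition~\ref{pr5}$(ii)$ gives that $(\nabla,g)$ is a Codazzi pair; then Proposition~\ref{pr2}$(i)\Leftrightarrow(iii)$ — applied with the roles of $g$ and $G$ swapped, i.e. using that $g$ is itself an anti-Hermitian metric and $\nabla^{\ast}$ is its $g$-conjugate — shows $(\nabla^{\ast},g)$ is a Codazzi pair; and since $\nabla^{\dagger}=\nabla^{\ast}$, this is exactly $(iii)$, while $(i)$ (the statement for $\nabla^{\dagger}$) coincides with $(iii)$. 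For $(ii)$: $(\nabla,g)$ being a Codazzi pair was already obtained, and $\nabla$ is torsion-free by hypothesis, so $(\nabla,g)$ is a statistical structure. For $(iii)$: identical to $(i)$ given $\nabla^{\dagger}=\nabla^{\ast}$.

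For the converse, suppose any one of $(i)$, $(ii)$, $(iii)$ holds. In each case the underlying connection ($\nabla^{\dagger}$, $\nabla$, or $\nabla^{\ast}$) is torsion-free (shown above) and forms a Codazzi pair with $g$. If $(\nabla,g)$ is Codazzi, Proposition~\ref{pr5}$(ii)$ immediately returns $(\nabla,G)$ Codazzi, hence a statistical structure. If instead $(\nabla^{\ast},g)$ (equivalently $(\nabla^{\dagger},g)$) is Codazzi, I would first pass back to $(\nabla,g)$ being Codazzi — again via Proposition~\ref{pr2} with $g$ in the metric slot, since $\nabla=(\nabla^{\ast})^{\ast}$ — and then apply Proposition~\ref{pr5}$(ii)$ as before. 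The only mildly delicate point, and the step I expect to need the most care in writing up, is the bookkeeping that lets us invoke Proposition~\ref{pr2} ``with $g$ in place of $G$'': its proof is stated to mirror Proposition~2.10 of \cite{Zhang}, and one must check that nothing in it used properties of $G$ beyond its being an anti-Hermitian (pure) metric — which $g$ also is — so the equivalence ``$(\nabla,g)$ Codazzi $\iff$ $(\nabla^{\ast},g)$ Codazzi'' is legitimate. Everything else is a direct chain of already-proven equivalences.
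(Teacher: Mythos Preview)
Your strategy coincides with the paper's: the authors' proof is the single sentence ``The result comes directly from Proposition~\ref{pr4} and~\ref{pr5},'' and your write-up is a careful unpacking of exactly that, together with the (necessary) observation that one also needs the $g$-version of Proposition~\ref{pr2} to pass between $(\nabla,g)$ Codazzi and $(\nabla^{\ast},g)$ Codazzi and to get $T^{\nabla^{\ast}}=T^{\nabla}$---a step the paper leaves implicit.

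One correction is warranted in your torsion bookkeeping. The identity you quote,
\[
T^{\nabla^{\ast}}(Z,X)=T^{\nabla}(Z,X)\pm J^{-1}\bigl\{(\nabla^{\ast}_{Z}J)X-(\nabla^{\ast}_{X}J)Z\bigr\},
\]
is not valid in general: it is precisely what is derived in the proof of Proposition~\ref{pr1}(iii) \emph{under the standing assumption that $(\nabla,G)$ is Codazzi}. In particular, ``$\nabla$ torsion-free and $J$-invariant'' alone does \emph{not} force $\nabla^{\ast}$ to be torsion-free (indeed, for torsion-free $\nabla$ one has $g(T^{\nabla^{\ast}}(X,Y),Z)=(\nabla_{X}g)(Y,Z)-(\nabla_{Y}g)(X,Z)$, which vanishes exactly when $(\nabla,g)$ is Codazzi). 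This does not damage your argument: in the forward direction $(\nabla,G)$ is Codazzi by hypothesis, so Proposition~\ref{pr2}(iv) together with $\nabla^{\dagger}=\nabla^{\ast}$ gives $T^{\nabla^{\ast}}=T^{\nabla}=0$; and in the converse, torsion-freeness of $\nabla^{\ast}$ (resp.\ $\nabla^{\dagger}$) is part of assuming that $(\nabla^{\ast},g)$ (resp.\ $(\nabla^{\dagger},g)$) is a statistical structure. Just rephrase the torsion step so it is clear you are invoking the Codazzi hypothesis rather than a general identity.
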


\begin{proof}
The result comes directly from Proposition \ref{pr4} and \ref{pr5}.
\end{proof}

\begin{theorem}
\noindent \noindent Let $\nabla $ be a $J-$invariant torsion-free linear
connection on $(M,J,g,G)$. ${\nabla }^{\ast }$ denote the $g-$conjugation of
$\nabla $ on $M$. $({\nabla },G)$ is a statistical structure if and only if $%
({\nabla }^{\ast },G)$ is so.
\end{theorem}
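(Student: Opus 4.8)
The plan is to reduce the statement entirely to the propositions already proved, after unwinding the definition: $(\nabla ,G)$ being a statistical structure means exactly that $\nabla$ is torsion-free and that $(\nabla ,G)$ is a Codazzi pair, so the proof splits into transporting the Codazzi property and transporting torsion-freeness from $\nabla$ to ${\nabla }^{\ast }$.

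First I would dispose of the Codazzi part, which is essentially immediate. Since $\nabla$ is $J$-invariant, Proposition \ref{pr5}$(i)$ gives ${\nabla }^{\dagger }={\nabla }^{\ast }$; and Proposition \ref{pr2} says that $(\nabla ,G)$ is a Codazzi pair if and only if $({\nabla }^{\dagger },G)$ is one. Chaining these, $(\nabla ,G)$ is a Codazzi pair if and only if $({\nabla }^{\ast },G)$ is.

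Next I would deal with torsion. The point is that a $J$-invariant connection has $\nabla J=0$, so $(\nabla _{Z}J)X=0=(\nabla _{X}J)Z$ and hence $(\nabla ,J)$ is automatically a Codazzi pair; moreover, by Proposition \ref{pr4}$(i)$ its $g$-conjugate ${\nabla }^{\ast }$ is again $J$-invariant, so $({\nabla }^{\ast },J)$ is a Codazzi pair too. Under the hypothesis that $(\nabla ,G)$ is a statistical structure we in particular have that $(\nabla ,G)$ is a Codazzi pair, so Proposition \ref{pr1}$(iii)$ applies and yields $T^{\nabla }=T^{{\nabla }^{\ast }}$; since $\nabla$ is torsion-free, so is ${\nabla }^{\ast }$. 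Combined with the previous paragraph, this proves the forward implication: if $(\nabla ,G)$ is a statistical structure then so is $({\nabla }^{\ast },G)$.

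For the converse I would invoke involutivity. Since $({\nabla }^{\ast })^{\ast }=\nabla$ and, by Proposition \ref{pr4}$(i)$, ${\nabla }^{\ast }$ is also $J$-invariant, the hypotheses of the theorem hold verbatim with ${\nabla }^{\ast }$ in the role of $\nabla$; applying the forward implication just established to ${\nabla }^{\ast }$ gives that $({\nabla }^{\ast },G)$ statistical implies $(\nabla ,G)$ statistical. I do not anticipate a genuine obstacle: the one place that requires care is the torsion bookkeeping, namely making sure Proposition \ref{pr1}$(iii)$ is actually invoked (its hypothesis that $(\nabla ,G)$ is Codazzi being available) rather than attempting an ad hoc computation with the cubic form, and checking that it is precisely the torsion-free hypothesis on $\nabla$ that forces ${\nabla }^{\ast }$ to be torsion-free here.
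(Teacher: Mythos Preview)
Your proof is correct. The paper's own argument is a single line invoking only Proposition~\ref{pr1} together with $J$-invariance, whereas you route the Codazzi half through Proposition~\ref{pr5}$(i)$ (giving ${\nabla}^{\dagger}={\nabla}^{\ast}$) and Proposition~\ref{pr2} (the equivalence $(\nabla,G)\Leftrightarrow({\nabla}^{\dagger},G)$). For the torsion half the two arguments coincide: $J$-invariance forces $({\nabla}^{\ast},J)$ to be trivially Codazzi (via Proposition~\ref{pr4}$(i)$), and Proposition~\ref{pr1}$(iii)$ then yields $T^{\nabla}=T^{{\nabla}^{\ast}}$. Your detour through ${\nabla}^{\dagger}$ has the mild advantage that the equivalences in Proposition~\ref{pr2} are already two-sided; in fact, for the converse you need not invoke involutivity at all, since $\nabla$ is torsion-free by hypothesis and the Codazzi equivalence from your first step is already bidirectional.
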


\begin{proof}
The result immediately follows from Proposition \ref{pr1}, using the
condition of $\nabla $ being $J-$invariant.
\end{proof}

\end{document}